\newtheorem{theorem}{Theorem}
\newtheorem{lemma}[theorem]{Lemma}
\newtheorem{corollary}[theorem]{Corollary}
\newtheorem{proposition}[theorem]{Proposition}
\newcommand{\whp}{\text{~w.h.p.}\xspace}
\newcommand{\uar}{\text{~u.a.r.}\xspace}
\newcommand{\remove}[1] {}
\newcommand{\cB}{\mathcal{B}}
\newcommand{\cN}{\mathcal{N}}
\renewcommand{\log}{\ln}
\def\E{\mathbb E}
\title{Rainbow connectivity of multilayered random geometric graphs
}
\author[1]{Josep D\'iaz\thanks{Email: diaz@cs.upc.edu Research of J.~D. is supported by the Spanish Ministery of Science \& Innovation MCIN/AEI/10.13039/501100011033 (Project PID2020-112581GB-C21 MOTION)}}
\author[2]{Öznur Yaşar Diner\thanks{Email: oznur.yasar@khas.edu.tr Research of Ö.~Y. is supported by by the Spanish Ministery of Science \& Innovation MCIN/AEI/10.13039/501100011033 (Project PID2020-112581GB-C21 MOTION)}}
\author[1]{Maria Serna\thanks{Email: mjserna@cs.upc.edu Research of M.~S. is supported by by the Spanish Ministery of Science \& Innovation MCIN/AEI/10.13039/501100011033 (Project PID2020-112581GB-C21 MOTION)}}
\author[1]{Oriol Serra\thanks{Email: oriol.serra@upc.edu Research of O.~S. is supported by the Spanish Agencia Estatal de Investigaci\'{o}n  [PID2020-113082GB-I00, CONTREWA]}}
\affil[1]{Universitat Polit\`{e}cnica de Catalunya, Barcelona}
\affil[2]{Kadir Has University, Istanbul}
\begin{document}

\maketitle
\begin{abstract}


An edge-colored multigraph $G$ is rainbow connected if every pair of vertices is joined by at least one rainbow path, i.e., a path where no two edges are of the same color. 
 In the context of multilayered networks we introduce the notion of multilayered  random geometric graphs, from  $h\ge 2$  independent random geometric graphs $G(n,r(n))$ on the unit square. We define an edge-coloring by coloring the edges according to the copy of $G(n,r(n))$ they belong to and study the rainbow connectivity of the resulting edge-colored multigraph. We show that $r(n)=\left(\frac{\log n}{n}\right)^{\frac{h-1}{2h}}$ is a threshold of the radius for the property of being rainbow connected.  This complements  the known analogous results for the multilayerd graphs defined on  the Erd\H{o}s-R\'{e}nyi random model.
\end{abstract}

\section{Introduction}\label{sec:intro}
Complex networks are used to simulate large-scale real-world systems, which may consist of various interconnected sub-networks or topologies. For example, this could include different transportation systems and the coordination of their schedules, as well as modeling interactions across different topologies of the network. Barrat et al.  
proposed a new network model to more accurately represent the emerging large network systems, which include coexisting and interacting different topologies \cite{Barrat2004}. Those network models are known as  {\em layered complex networks, multiplex networks} or as {\em multilayered networks}. 

In a multilayered network, each type of interaction of the agents gets its own layer, like a social network having a different layer for each relationship, such as friendship or professional connections 
\cite{dickison2016}.

Recently, there's been a lot of interest in adapting tools used in the analysis for single-layer networks to the study of multilayered  ones, both in deterministic and random models \cite{bianconi2018}.
In the present work, we introduce a random model of colored multigraphs, the \emph{multilayered random geometric graphs} and explore thresholds on their radius for being {\em rainbow connected}.
\subsection{Random Geometric Graphs} 
A {\em random geometric graph} , $G(n,r(n))$ on the 2-dimensional unit square $I^2=[0,1]^2$ is defined as follows: Given a set 
of $n$ vertices $V$ and a radius $r(n)\in[0,\sqrt{2}]$,
the vertices are sprinkled uniformly at random (\uar) in the unit square $I^2$.  Two vertices $u,v$ are adjacent if and only if their Euclidean distance is less than or equal to $r(n)$. In this paper, when it is clear from context, we shorten $r(n)$ to $r$.

Random geometric graphs provide a natural framework for the design and analysis of relay stations and wireless networks.  
Gilbert initially proposed this model to simulate the placement of relays between telephone stations \cite{gilbert}. His work  is also considered as the beginning of continuum percolation theory.

Since the introduction of wireless communication, random geometric graphs have been one of the main models for these communication networks, leading to a fruitful line of research in this field; see, for example~\cite{Duchemin}. 

For further information on random geometric graphs, one may refer to the book by Penrose \cite{Penrose} or to the more recent survey by Walters \cite{Walters}. 

Random geometric graphs exhibit a sharp threshold behavior with respect to monotone increasing graph properties~\cite{goel2005}. In particular, for connectivity,  as the value of $r(n)$ increases,  there is a critical threshold value $r_c(n)$ such that when $r(n) < r_c(n)$, the graph is typically disconnected, while for $r(n) > r_c(n)$, the graph is typically connected. The threshold for connectivity of $G(n,r(n))$ appears at $r_c(n)\sim \sqrt{\frac{\ln n -\ln\ln n}{\pi n}}$, and it is a sharp threshold, in the sense that the threshold's window could be made as small as one wishes. Notice $r_c(n)$ is also a threshold for the disappearance of isolated vertices in $G(n,r(n))$. 
Regarding the diameter of a random geometric graph $G(n,r(n))$,  D\'{\i}az et al. showed that, if $r(n)=\Omega(r_c(n))$, then the diameter of $G(n,r(n))$ is $(1+o(1))\frac{\sqrt{2}}{r(n)}$~\cite{Diaz2016}.

Given a vertex $v$ in a random geometric graph,  $G=G(n,r(n))$, we use $\cB_{G} (v)$
to denote the set of vertices that fell inside the circle  centered at $v$ with radius $r(n)$, the {\em `v-ball'}, and it represents  the set of vertices connected to $v$ in $G(n,r(n))$. So for any given vertex $v$ in $G(n,r(n))$,  $|\cB_{G} (v)|$ is the number of neighbors of $v$ in $G(n,r)$.

Given a random geometric graph $G(n,r(n))$ on $I^2$,  for any vertex  $v\in V(G)$,  $\cB_{G}(v)$ is defined by a binomial random process in $V\setminus \{v\}$, where $u\not= v$
belongs to $\cB_{G}(v)$ with probability $$\frac{\pi r(n)^2}{4}\le \Pr(u\in \cB_{G} (v))\le \pi r(n)^2.$$ The lower bound is due to the {\em `boundary effect'} in $I^2$: If $v$ is at one of the  corners in $I^2$, the part of the area inside the circle  centered at $v$ with radius $r(n)$ and $I^2$ is $\frac{\pi r(n)^2}{4}$. Define the random variable $Z_v=|\cB_{G}(v)|$. Therefore, 
\begin{equation}\label{eq:expetball}
\frac{\pi n r(n)^2}{4}\le \E(Z_v) \le \pi n r(n)^2 \,.
\end{equation}
It is also well-known  that, using Chernoff's bounds, the distribution of $Z_{v}$ is concentrated around its expectation, namely for $0<\epsilon<1$, 
\begin{equation}\label{eq:chernoff}
\Pr\left(|Z_{v} - \E(Z_{v})|> \epsilon \,\E(Z_{v})\right)\le 2e^{\frac{-\epsilon^2\E(Z_{v})}{3}}\,.
\end{equation} 
\subsection{Multilayered Random Geometric graphs}\label{subsection:MRG} 
We introduce a general definition for the random model of edge-colored multigraphs obtained by superposition of a collection of random geometric graphs on the same set $V=\{v_1,\ldots v_n\}$  of vertices. Formally, a \emph{multilayered geometric graph} $G(n, r(n), h, b)$ is defined by four parameters,  $n=|V|$,
$r(n)$ the radius of connectivity of all the random geometric graphs, and $h$ the number of layers,  together with a function that gives  the position assignment $b:[n]\to \underbrace{I^2 \times \dots \times I^2}_h$.
For $v_i\in V$, we denote $b(i)=(b_1^i,\dots, b_h^i)$, where $b_k^i\in [0,1]^2$, for $k\in [h]$ and $i\in[n]$, gives the position of vertex $v_i$ in $G_k$. 

The colored multigraph  $G=G(n,r(n),h,b)$ has  vertex set $V$ and for $k\in [h]$,  there is an edge $(v_i,v_j)$ with color $k$,  if 
the Euclidean distance between $b_k^i$ and $b^j_k$ is at most $r(n)$. 
See Figure~\ref{fig:2layer} for an example  with two layers. 

Note that, for $k\in [h]$, $r(n)$ and  positions $(b_k^i)_{i=1}^n$, a geometric graph $G_k(n,r)$ is defined by the edges with color $k$. Thus, $G(n,r(n),h,b)$ can be seen as the colored union of $h$  geometric graphs, all with the same vertex set and radius. Observe that $G(n,r(n),h,b)$ is defined in $I^{2h}$.  We refer to $G_k(n,r)$ as the $k$-th layer of $G(n,r(n),h,b)$. We denote the $v$-ball $\cB_{G_i}(v)$ of a vertex $v$ in the $i$--th. layer as $\cB_{i}(v)$.

A \emph{multilayered random geometric graph} 
$G(n, r(n),h)$ is obtained  when 
the position assignment $b$ of the vertices is selected independently, for each vertex and layer,  uniformly at random in $[0,1]^2$. 
Thus, the $k$-th layer of $G=G(n,r(n), h)$ is a random geometric graph and $G$ is the colored  union of $h$ independent random geometric graphs.  This definition is given for $I^2$, as in the present paper we only consider geometric graphs on a 2-dimensional space. However, notice that the previous definition  can be extended to geometric graphs in a $\delta$-dimensional space by redefining the scope of the position function to $[0,1]^\delta$. However, in this paper we deal only with the case $\delta = 2$.

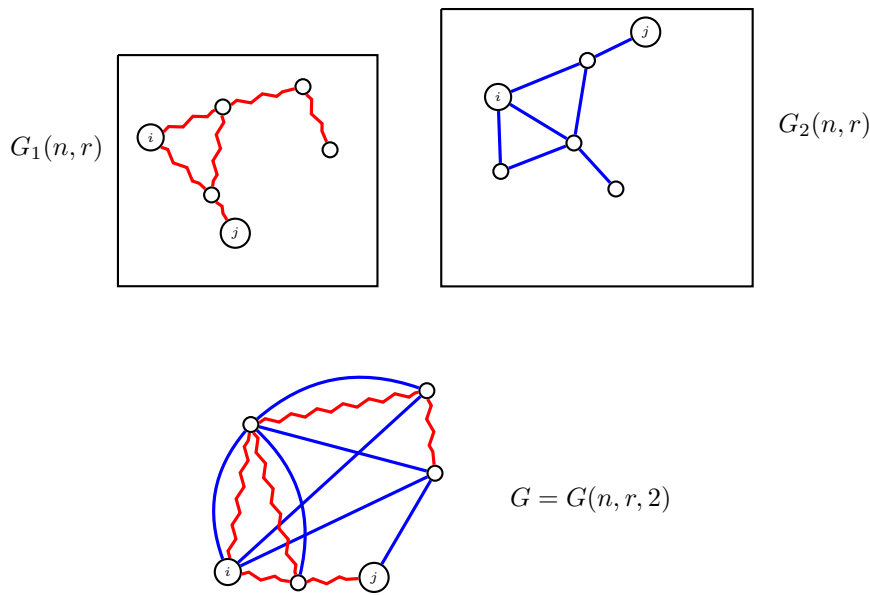
\begin{figure}[t]
\begin{center}
\begin{tikzpicture}[scale=1.5,thick]
		\tikzstyle{every node}=[minimum width=0pt, inner sep=2pt, circle]
			\draw (-4.01,1.27) node[draw] (0) {\tiny $i$};
			\draw (-3.37,1.54) node[draw] (1) {};
			\draw (-2.66,1.72) node[draw] (2) {};
			\draw (-2.42,1.16) node[draw] (3) {};
			\draw (-3.47,0.76) node[draw] (4) {};
			\draw (-3.26,0.42) node[draw] (5) {\tiny $j$};
			\draw (-4.3,2)  -- (-4.3,-0.05) -- (-2,-0.05) --(-2,2)  -- (-4.3,2);
			\draw[red,very thick, decorate, decoration={zigzag,amplitude=.4mm}]   (0) -- (4);
			\draw[red,very thick, decorate, decoration={zigzag,amplitude=.4mm}]   (0) -- (1);
			\draw[red,very thick, decorate, decoration={zigzag,amplitude=.4mm}]    (1) -- (4);
			\draw[red,very thick, decorate, decoration={zigzag,amplitude=.4mm}]   (4) -- (5);
			\draw[red,very thick, decorate, decoration={zigzag,amplitude=.4mm}]   (1) -- (2);
			\draw[red,very thick, decorate, decoration={zigzag,amplitude=.4mm}]   (2) -- (3);
                \draw (-4.84,1.17) node {$G_1(n,r)$};
		\end{tikzpicture} \qquad
  		\begin{tikzpicture}[scale=1.5,thick]
		\tikzstyle{every node}=[minimum width=0pt, inner sep=2pt, circle]
			\draw (-3.88,1.35) node[draw] (0) {\tiny $i$};
			\draw (-3.32,1.01) node[draw] (1) {};
			\draw (-3.86,0.8) node[draw] (2) {};
			\draw (-3.22,1.62) node[draw] (3) {};
			\draw (-3.01,0.67) node[draw] (4) {};
			\draw (-2.79,1.83) node[draw] (5) {\tiny $j$};
                \draw (-4.3,2)  -- (-4.3,-0.05) -- (-2,-0.05) --(-2,2)  -- (-4.3,2);
                \draw[very thick,blue]  (3) edge (5);
			\draw[very thick,blue]  (0) edge (2);
			\draw[very thick,blue]  (0) edge (1);
			\draw[very thick,blue]  (0) edge (3);
			\draw[very thick,blue]  (1) edge (4);
			\draw[very thick,blue]  (1) edge (2);
			\draw[very thick,blue]  (1) edge (3);
                \draw (-1.4643,1.14) node {$G_2(n,r)$};
		\end{tikzpicture}\vskip 1cm 
  		\begin{tikzpicture}[scale=1.5,thick]
		\tikzstyle{every node}=[minimum width=0pt, inner sep=2pt, circle]
			\draw (-3.91,0.6) node[draw] (0) {\tiny $i$};
			\draw (-3.74,1.69) node[draw] (1) {};
			\draw (-2.44,1.94) node[draw] (2) {};
			\draw (-2.38,1.33) node[draw] (3) {};
			\draw (-3.39,0.52) node[draw] (4) {};
			\draw (-2.83,0.56) node[draw] (5) {\tiny $j$};
			\draw[very thick,blue]  (3) edge (5);
			\draw[very thick,blue]  (0) edge (2);
			\draw[very thick,blue] (0) edge [bend left] (1);
			\draw[very thick,blue]  (0) edge (3);
			\draw[very thick,blue]  (1) edge [bend left] (4);
			\draw[very thick,blue]  (1) edge [bend left] (2);
			\draw[very thick,blue]  (1) edge (3);
                \draw[red,very thick, decorate, decoration={zigzag,amplitude=.4mm}]   (0) -- (4);
			\draw[red,very thick, decorate, decoration={zigzag,amplitude=.4mm}]   (0) -- (1);
			\draw[red,very thick, decorate, decoration={zigzag,amplitude=.4mm}]   (1) -- (4);
			\draw[red,very thick, decorate, decoration={zigzag,amplitude=.4mm}]   (4) -- (5);
			\draw[red,very thick, decorate, decoration={zigzag,amplitude=.4mm}]   (1) -- (2);
			\draw[red,very thick, decorate, decoration={zigzag,amplitude=.4mm}]   (2) -- (3);
			\draw (-1.23,1.14) node  {$G=G(n,r,2)$};
		\end{tikzpicture}

  \end{center}
 \caption{A multilayered random geometric graph $G=G(n,r,2)$ on $I=[0, 1]$ (at the bottom) and its layers which are monochromatic random geometric graphs (at the top.) \label{fig:2layer}}
\end{figure}
\subsection{Rainbow Connectivity}

Given an edge-colored multigraph $G$, we say that $G$ is  {\em rainbow connected} if between any pair of vertices $u,v\in V(G)$, there is a path, called  a \emph{rainbow path}, with edges of pairwise distinct colors. Observe that the colored multigraph $G$ given in Figure~\ref{fig:2layer} is not rainbow connected as there is no rainbow path from the vertex $i$ to the vertex $j$.
 
The \emph{rainbow connection number} of a connected graph $G$ is the minimum number of colors for which $G$
admits a (not necessarily proper) 
edge-coloring such that $G$ is rainbow connected. 
Chartrand et al.~\cite{Chartraud2009} introduced the study of the rainbow connectivity of graphs as a  property to secure strong connectivity in graphs and networks. Since then, variants of rainbow connectivity have been applied to different
deterministic as well as random models of graphs. See the survey of Li et al.~\cite{Li2013}, for further details on the extension of rainbow connectivity to other graph models.

The study of rainbow connectivity has been addressed in the context of {\em multilayered binomial random graphs} by Bradshaw and Mohar~\cite{bradshaw2021}, where the authors give sharp concentration results for three values of the number $h$ of layers needed to ensure rainbow connectivity of the resulting multilayered binomial random graph $G(n,p)$, for the appropriate values of $p$. Those results have been extended by Shang~\cite{Shang2023} to ensure $k$-rainbow connectivity in the same model, namely, the existence of $k$ internally disjoint rainbow paths joining every pair of vertices in the multilayered graph.

In the present paper, we are interested in studying the rainbow connectivity of a multilayered random geometric graph $G(n,r(n),h)$. 
In particular, for every fixed $h$, we are interested in the minimum value of $r(n)$  such that \whp\footnote{As usual \whp means {\em with high probability}, i.e., with probability tending to $1$ as $n\to\infty$.} the multilayered random geometric graph $G(n,r(n),h)$ is rainbow connected. In the same way, for fixed values of $r$, we want to determine the minimum number of layers $h$ such that $G(n,r(n),h)$ is \whp rainbow connected. The second parameter can be defined as the rainbow connectivity of the multilayered random geometric graph.
\section{Main results} 
Our main results are the lower and upper bounds of the value of $r(n)$, to asymptotically ensure  that \whp , $G(n,r(n),h)$ has  or does not have the property of being rainbow connected. 
Through the paper, we assume that  $r(n) \to 0$ as $n\to \infty$.

\begin{theorem}\label{thm:main1} Let $h\ge 2$  be an integer and let $G=G(n,r(n),h)$ be an $h$-layered random geometric graph. Then,  if
$$
r(n)\ge b\left(\frac{\log n}{n^{h-1}}\right)^{1/2h},
$$
for $b>\left(\frac{2^{2+3(h-1)}}{\pi^3}\right)^{1/(2h)}$,
then \whp $G(n,r(n),h)$ is rainbow connected. 
\vspace{0.5cm}

\noindent
Moreover, if
$$
r(n)\le c\left(\frac{\log n}{n^{h-1}}\right)^{1/2h},
$$
for $c<\frac{2}{3\pi}\left(\frac{1}{2\pi}\right)^{h-1}$,
then \whp $G$ is not rainbow connected.
\end{theorem}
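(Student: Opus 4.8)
The plan is to prove both halves by complementary ball-growth arguments, exploiting that the $h$ layers are mutually independent, so that a step taken in layer $k$ lands on a vertex whose positions in the other layers are still fresh.

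\emph{The positive bound.} Fix an ordered pair $u,v$ and the colour order $1,2,\dots,h$. Put $R_0=\{u\}$ and, for $1\le j\le h-1$, $R_j=\bigcup_{w\in R_{j-1}}\cB_j(w)$, so $R_j$ is the set of endpoints of walks $u\to x_1\to\cdots\to x_j$ whose $i$-th edge has colour $i$. If $v$ has a layer-$h$ neighbour in $R_{h-1}$, then $u$ and $v$ are joined by a rainbow walk, hence by a rainbow path (a rainbow walk always contains a rainbow path), so it suffices to control $R_{h-1}$ and then the last step. I would show by induction on $j$ that $|R_j|\ge\alpha_j(h)\,(nr^2)^j$ w.h.p.\ for explicit constants $\alpha_j(h)$: the base case is~\eqref{eq:expetball} together with~\eqref{eq:chernoff}, and for the step one uses that the layer-$j$ positions of the vertices of $R_{j-1}$ are independent of layers $1,\dots,j-1$ (which determine $R_{j-1}$); conditionally on $R_{j-1}$ and those positions, $|R_j|$ is a sum of independent indicators with mean $(n-|R_{j-1}|-1)\cdot\mathrm{area}\big(\bigcup_{w\in R_{j-1}}B(b_j^w,r)\big)$, and the expected area of a union of $m$ radius-$r$ discs with independent uniform centres is at least $1-e^{-m\pi r^2/4}\ge\tfrac18 m\pi r^2$ in the regime $mr^2\to0$, which holds at every level $j\le h-1$ when $r=\Theta\big((\log n/n^{h-1})^{1/2h}\big)$. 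Concentration of this area (it has bounded differences $\pi r^2$ as a function of the centres) followed by Chernoff for $|R_j|$ closes the induction, with all failure probabilities $e^{-n^{\Omega(1)}}$, so the union bound over pairs and levels is harmless.

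Given $|R_{h-1}|\ge\alpha(nr^2)^{h-1}$, the layer-$h$ positions are independent of everything revealed, so any fixed point $z$ avoids all the $|R_{h-1}|$ radius-$r$ discs around the layer-$h$ positions of $R_{h-1}$ with probability at most $(1-\pi r^2/4)^{|R_{h-1}|}\le e^{-(\pi\alpha/4)n^{h-1}r^{2h}}$; applied to $z=b_h^v$ this gives $\Pr[v\text{ has no layer-}h\text{ neighbour in }R_{h-1}]\le n^{-(\pi\alpha/4)b^{2h}}$. Choosing $b$ so that $(\pi\alpha/4)b^{2h}>2$ and union-bounding over the $\binom n2$ pairs finishes the positive statement; a symmetric variant that grows $\lceil h/2\rceil$ colours from $u$, $\lfloor h/2\rfloor$ from $v$ and asks the two independent sets to intersect gives slightly better constants. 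I expect the main work here to be the growth lemma: extracting clean constants from the union-of-discs area estimate and carrying Chernoff concentration through the $h-1$ successive (not quite independent) layers; a careful treatment of those constants is what turns the clean threshold $\Theta\big((\log n/n^{h-1})^{1/2h}\big)$ into the explicit $b$ of the statement.

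\emph{The negative bound.} The starting point is a characterisation: for fixed $u$ there is \emph{no} rainbow $u$--$v$ walk iff for every colour $k\in[h]$ the vertex $v$ has no layer-$k$ neighbour in $A_k:=\{w:\text{some rainbow walk from }u\text{ to }w\text{ uses only colours of }[h]\setminus\{k\}\}$ (note $u\in A_k$). Indeed a rainbow $u$--$v$ walk whose last edge has colour $k$ exhibits a vertex of $A_k$ that is a layer-$k$ neighbour of $v$, and conversely. Fix $u=v_1$. For a vertex $v$, condition on the positions in layers $[h]\setminus\{k\}$: this determines $A_k$, and a first-moment count over rainbow walks of length $\le h-1$ (they have at most $h-1$ edges, since all colours are distinct) gives $\E|A_k|\le 2(h-1)!\,(\pi nr^2)^{h-1}$. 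The layer-$k$ position of $v$ is independent of this conditioning, so it avoids all $|A_k|$ discs with probability at least $(1-\pi r^2)^{|A_k|}$; taking expectations and using convexity, $\Pr[v\text{ avoids }A_k\text{ in layer }k]\ge(1-\pi r^2)^{\E|A_k|}\ge n^{-C(h)c^{2h}}$ for an explicit $C(h)$ of order $(h-1)!\,\pi^h$. Combining over the $h$ colours — the $h$ events are only mildly dependent, since $A_k$ depends on layers other than $k$ and on vertices other than $v$, so the decisive randomness (the layer-$k$ position of $v$) is fresh each time — yields $\Pr[\text{no rainbow }u\text{--}v\text{ walk}]\ge n^{-\beta}$ with $\beta=\beta(h)c^{2h}$ (of order $h!\,\pi^h c^{2h}$), and $\beta<1$ for $c$ below the stated constant.

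Finally let $W$ count the vertices $v$ with no rainbow $u$--$v$ walk, so $\E W\ge(n-1)n^{-\beta}\to\infty$. For the second moment, for $v\ne v'$ the two bad events are, after conditioning on the $A_k$'s (which one restricts to walks avoiding $v$ and $v'$, at a cost that is negligible in the estimate above), functions of the independent positions of $v$ and of $v'$, so $\E[W^2]\le(1+o(1))(\E W)^2$, and Chebyshev gives $W\ge1$ w.h.p.; thus some pair of vertices admits no rainbow path. The delicate points on this side are precisely the two "mild dependence" claims — the correlation among the $h$ single-colour avoidance events for one $v$, and the correlation among the bad events for different $v$ — both of which I would make rigorous by revealing the layers in a fixed order so that, at each use, the relevant position of $v$ (resp.\ $v'$) is independent of all previously exposed randomness; tracking the loss through these reductions is what separates the clean threshold from the explicit constant $c$ in the statement.
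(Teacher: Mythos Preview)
On the positive side your route matches the paper's: both establish a layer-by-layer growth lemma (your induction on $|R_j|$; the paper's Proposition~\ref{prop:exp}) and then close with a last-edge argument and a union bound over pairs. The paper actually implements the two-sided variant you mention at the end --- growing $\lfloor h/2\rfloor$ steps from each endpoint on disjoint halves $V_1,V_2$ of $V$ so that the two frontiers $A,B$ are disjoint, and then asking for a middle-layer edge between $A$ and $B$ --- rather than your primary one-sided version, but the mechanism is the same. One technical difference inside the growth lemma: you lower-bound the area of a union of random discs via a bounded-differences inequality, whereas the paper tessellates $I^2$ into subsquares of diagonal $r$ and lower-bounds the number of occupied subsquares using Lemma~\ref{prop:sizerm}; either device yields $|R_j|\ge\Theta((nr^2)^j)$.

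On the negative side the decompositions genuinely differ. The paper fixes one permutation $\sigma$, uses the upper half of the growth lemma to bound $|\cN_{h-1,\sigma}(u)|$, and notes that the indicators $Y_{ij}=\mathbf 1\{\cB_{\sigma(h)}(v_j)\cap\cN_{h-1,\sigma}(u)=\emptyset\}$ are conditionally independent in layer $\sigma(h)$; so Chernoff (rather than a second-moment computation) already gives many $\sigma$-bad $v_j$, and the argument is then ``repeated for all $h!$ permutations''. Your last-colour characterisation $\bigcap_k\{v\text{ has no layer-}k\text{ neighbour in }A_k\}$ makes the intersection over colour orders explicit, and the conditioning you sketch --- reveal all positions of vertices other than $v$, so that each (walk-through-$v$-free) $A_k$ is fixed and the $h$ avoidance events depend on the independent coordinates $b_1^v,\dots,b_h^v$ --- is the clean way to make the ``mild dependence'' rigorous; the second moment for $W$ then works for the same reason. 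The trade-off is that your $A_k$ aggregates all $(h-1)!$ colour orders ending in $k$, so the first-moment bound on $|A_k|$ picks up a factorial and the constant $c$ you produce is strictly smaller than the one in the statement; the paper's per-$\sigma$ accounting avoids that factor, at the cost of a much terser treatment of why a \emph{single} $v_j$ is simultaneously bad for every $\sigma$.
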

For a mutilayered random geometric graph, the
property of 
being rainbow connected is  monotone increasing  on $r$. We will  implicitly use this fact in the proofs  of the threshold of rainbow connectivity stated in Theorem~\ref{thm:main1}.

Notice that Theorem~\ref{thm:main1} can be restated as a threshold on $h$ for the existence \whp of the rainbow connectivity property of a multilayered geometric random graph $G(n,r,h)$, for  a given radius $r(n)$.

\begin{corollary}\label{cor:h} Let $r(n)=o(1)$. Set 
\begin{align*}
h_0&=\left\lfloor\frac{\log n+\log\log n}{2 \ln(r(n))+ \ln n - \ln 4 + \ln (3\pi)}\right\rfloor,\\
h_1&=\left\lceil\frac{\log n+\log\log n - \ln (2\pi^3)}{2 \ln(r(n))- \ln n - \ln 8}\right\rceil.
\end{align*}
Then, the multilayered random geometric graph $G(n,r(n),h)$ is \whp rainbow connected if $h\ge h_1$,  while if $h<h_0$, $G(n,r(n),h)$ is \whp    not rainbow connected.
\end{corollary}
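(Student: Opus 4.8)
The plan is to derive Corollary~\ref{cor:h} from Theorem~\ref{thm:main1} by reading the two hypotheses on $r(n)$ as inequalities to be solved for $h$. The first preliminary step is to remove the auxiliary constants $b,c$: the existence of a $b>\bigl(2^{2+3(h-1)}/\pi^{3}\bigr)^{1/(2h)}$ with $r(n)\ge b\bigl(\log n/n^{h-1}\bigr)^{1/(2h)}$ is equivalent to the single inequality
\[
r(n)^{2h}> \frac{2^{2+3(h-1)}}{\pi^{3}}\cdot\frac{\log n}{n^{h-1}},
\]
and in the same way the lower-bound hypothesis of Theorem~\ref{thm:main1} is equivalent to $r(n)^{2h}<\bigl(2/3\pi\bigr)^{2h}\bigl(1/2\pi\bigr)^{2h(h-1)}\log n/n^{h-1}$. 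Whether these are read as strict is irrelevant for the conclusion, since $h_0$ and $h_1$ are defined by a floor/ceiling while the coefficient of $h$ in the logarithmic forms below tends to infinity in absolute value, so it absorbs any bounded slack.

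For the upper threshold I would take logarithms of the first inequality, use $2+3(h-1)=3h-1$ and $\log\bigl(2^{3h-1}/\pi^{3}\bigr)=h\log 8-\log(2\pi^{3})$, and group the terms linear in $h$; the hypothesis becomes
\[
h\bigl(2\log r(n)+\log n-\log 8\bigr)>\log n+\log\log n-\log(2\pi^{3}).
\]
Because $r(n)=o(1)$, the coefficient $2\log r(n)+\log n-\log 8=\log\bigl(n\,r(n)^{2}/8\bigr)$ is eventually positive exactly when $n\,r(n)^{2}\to\infty$, the only regime in which finitely many layers can ever suffice; when it is non-positive no $h$ satisfies the inequality, Theorem~\ref{thm:main1} never applies, and the first assertion of the corollary is vacuous. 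In the positive case we divide and round up, obtaining $h_1$ and hence rainbow connectivity \whp for every $h\ge h_1$ (using also that rainbow connectivity is monotone in the number of layers).

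The lower threshold is the mirror image, with one extra complication: the admissible constant in the lower-bound hypothesis decays like $(2\pi)^{-(h-1)}$, so after taking logarithms the factor $\bigl(1/2\pi\bigr)^{2h(h-1)}$ contributes a term quadratic in $h$, and the hypothesis becomes
\[
2\log(2\pi)\,h^{2}+\bigl(2\log r(n)+\log n-\log\tfrac{16}{9}\bigr)h-\bigl(\log n+\log\log n\bigr)<0 .
\]
The left-hand side is a downward parabola in $h$, equal to $\log n+\log\log n>0$ at $h=0$ and with one negative and one positive root; hence it is negative for every positive integer $h$ strictly below its positive root $h^{\ast}$. Bounding the discriminant of this quadratic from above gives a lower bound for $h^{\ast}$ of the form $(\log n+\log\log n)/(2\log r(n)+\log n+c_{0})$ with an explicit constant $c_{0}$ — the value $-\log 4+\log(3\pi)$ being a conservative admissible choice — and taking its floor yields $h_0$. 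Thus $h<h_0$ forces the second hypothesis of Theorem~\ref{thm:main1}, so $G(n,r(n),h)$ is \whp not rainbow connected.

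I expect the upper threshold to be routine, as it is linear in $h$. \textbf{The main obstacle is the lower threshold}: because the admissible constant decays exponentially in $h$, solving the lower-bound hypothesis for $h$ is genuinely quadratic, and one must check that replacing the exact quadratic-formula threshold by the elementary expression defining $h_0$ still keeps it below the true root $h^{\ast}$ over the whole range of $r(n)$ of interest — concretely, in the regime where $h_0$ stays bounded, i.e.\ $n\,r(n)^{2}=n^{\Omega(1)}$, which is precisely the regime in which the corollary carries content. Everything else is elementary manipulation of logarithms, together with the remark that, by monotonicity in $h$, it suffices to argue at the threshold values $h_0$ and $h_1$ themselves.
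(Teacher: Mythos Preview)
The paper does not actually prove Corollary~\ref{cor:h}; it is stated immediately after Theorem~\ref{thm:main1} with the remark that the theorem ``can be restated as a threshold on $h$'', and the subsequent sections prove only Theorem~\ref{thm:main1}. So there is nothing to compare against beyond the intention that $h_0,h_1$ are obtained by inverting the two inequalities of Theorem~\ref{thm:main1} in $h$ --- which is precisely your plan.

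Your derivation of the $h_1$ side is correct: after logarithms the condition $r^{2h}>2^{3h-1}\pi^{-3}\log n/n^{h-1}$ becomes $h\bigl(2\log r+\log n-\log 8\bigr)>\log n+\log\log n-\log(2\pi^3)$, linear in $h$, and rounding up gives the threshold. (Your denominator carries $+\log n$; the printed statement has $-\log n$, which would make $h_1$ negative for all $r=o(1)$, so your version is the intended one.) Your identification of the $h_0$ side as genuinely quadratic in $h$, because the admissible constant $c<\tfrac{2}{3\pi}(2\pi)^{-(h-1)}$ itself depends on $h$, is also right and is a point the paper glosses over.

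Two small corrections on the quadratic analysis. The leading coefficient is $2\log(2\pi)>0$, so the parabola opens \emph{upward}, not downward, and its value at $h=0$ is $-(\log n+\log\log n)<0$, not positive; your conclusion (one negative and one positive root, inequality holds for $0\le h<h^{\ast}$) is nonetheless correct for exactly this reason. More substantively, the last step --- replacing the positive root $h^{\ast}$ of the quadratic by the simpler linear expression defining $h_0$ --- is asserted rather than verified: you would need to check that dropping the quadratic term $2\log(2\pi)h^2$ and adjusting the linear coefficient to $2\log r+\log n-\log 4+\log(3\pi)$ really yields a lower bound for $h^{\ast}$. In the regime where $h_0$ stays bounded (i.e.\ $nr^2=n^{\Omega(1)}$) this amounts to a routine estimate, but it should be written out.
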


In the next sections, we prove Theorem~\ref{thm:main1}. 
The intuition behind the proof of our result is to find the adequate graph property on which to base the threshold. One can expect multilayered random geometric graphs to have better expansion properties than  random geometric graphs.  Since  the vertices are random scattered in each layer, they have better chances to connect to other vertices than in a random geometric  graph.  This happens even when only rainbow paths are considered to connect vertices.  The natural expansion factor to look at is $nr^2$, which is of the same order of magnitude as the expected size of a neighborhood in a random geometric graph.  The radius $r(n)$ in our threshold, confirms the property that the number of  neighbors in the next layer grows with a factor $nr^2$ until the one before the last.  We use this property to show our main result.

The proof of  case $h=2$ requires a special argument, given in Section \ref{sec:h=2}, that allow us  pinpoint better values for the constants in the threshold than the ones described in Theorem~\ref{thm:main1} for the general case. In Section~\ref{sec:exp}, which is of independent interest, we analyze the local expansion. 
Section \ref{sec:h>2} contains the proof of the lower bound in Theorem \ref{thm:main1}  for the case $h\ge 3$. The proof of the upper bound is given in Section \ref{sec:upper}. The paper concludes with some final remarks and open problems.

\section{Rainbow Connectivity of Two-layered Random Geometric Graphs}\label{sec:h=2}

The proof of Theorem \ref{thm:main1} for the case $h=2$  is given in Proposition \ref{prop:main1h=2} below. Next, we recall the upper and lower bounds for the probability that two vertices $v_i,v_j$ are adjacent in a random geometric graph $G(n,r)$. The lower bound takes into account the boundary effect. 
The next result is a refinement of Equation~\eqref{eq:expetball}.
\begin{lemma} Let $v_i, v_j$ be two vertices in a random geometric graph $G(n,r(n))$. Let $\cB (v_j)$ be the ball of $v_j$-ball in $G(n,r)$. We have
\begin{equation} \label{eq:basicprob}
\pi r(n)^2-o(r(n)^2)\le\Pr (v_i\in \cB (v_j))\le\pi r(n)^2\, ,
\end{equation}
\end{lemma}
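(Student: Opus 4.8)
The plan is to estimate $\Pr(v_i \in \cB(v_j))$ directly from the definition of the random geometric graph. Condition on the position $b^j \in I^2$ of $v_j$; then $v_i$ (placed uniformly at random in $I^2$, independently of $v_j$) lies in $\cB(v_j)$ exactly when $b^i$ falls in the intersection $D(b^j,r)\cap I^2$ of the disc of radius $r$ about $b^j$ with the unit square. Hence the conditional probability equals $\mathrm{area}\big(D(b^j,r)\cap I^2\big)$, and the unconditional probability is the average of this area over $b^j$ chosen uniformly in $I^2$:
\begin{equation*}
\Pr(v_i \in \cB(v_j)) = \int_{I^2} \mathrm{area}\big(D(x,r)\cap I^2\big)\, dx .
\end{equation*}
The upper bound $\le \pi r^2$ is immediate since the intersected area never exceeds the full disc area $\pi r^2$.

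For the lower bound I would split $I^2$ into an ``interior'' region $I_r = [r,1-r]^2$ (valid once $r<1/2$, which holds since $r(n)\to 0$) and the remaining boundary strip of area $1-(1-2r)^2 = 4r - 4r^2 = O(r)$. For $x \in I_r$ the disc $D(x,r)$ is entirely contained in $I^2$, so the integrand equals exactly $\pi r^2$ there; for $x$ in the boundary strip the integrand is nonnegative (and in fact at least $\pi r^2/4$, but we only need $\ge 0$). Therefore
\begin{equation*}
\Pr(v_i \in \cB(v_j)) \ge \pi r^2 \cdot \mathrm{area}(I_r) = \pi r^2 (1-2r)^2 = \pi r^2 - 4\pi r^3 + 4\pi r^4 = \pi r^2 - O(r^3) = \pi r^2 - o(r^2),
\end{equation*}
using $r = r(n) \to 0$. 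This gives \eqref{eq:basicprob}.

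There is essentially no serious obstacle here; the only thing to be careful about is the geometric bookkeeping — making sure the disc-square intersection is handled on the right regions and that ``$r<1/2$'' is justified, which follows from the standing assumption $r(n)\to 0$. One could alternatively quote the classical boundary-effect computation from Penrose's book \cite{Penrose}, but the interior/boundary-strip split above is self-contained and suffices: the interior contributes the main term $\pi r^2$ and the boundary strip, having area $O(r)$, can only erode it by $O(r^3) = o(r^2)$. It may be worth remarking explicitly that the error term is in fact $\Theta(r^3)$, which sharpens \eqref{eq:expetball} as claimed, and that the same argument shows the per-vertex lower bound $\pi r^2/4$ attained only at the corners.
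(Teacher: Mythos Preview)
Your proof is correct and follows essentially the same approach as the paper: condition on the position of $v_j$, split $I^2$ into the interior square $[r,1-r]^2$ (where the full disc fits) and the boundary strip, and observe that the interior already contributes $\pi r^2(1-2r)^2=\pi r^2-O(r^3)$. The only cosmetic difference is that the paper also keeps the $\tfrac{\pi r^2}{4}$ contribution from the boundary strip in the lower bound, whereas you simply bound it below by $0$; both yield $\pi r^2 - o(r^2)$.
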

\begin{proof}  
 To get the result, we have to consider the effect on the size of $\cB (v_j))$ whne $v_j$ is closed to the boundary of $I^2$.   Let $R$ be the   subsquare of $I=[0,1]^2$, with sides of length  $1-2r$ and centered at $(1/2,1/2)$. Then, conditioning on the events that $v_j$ falls or not inside of $R$, we have 
\begin{align*}
\Pr (v_i\in \cB (v_j)) &= \Pr (v_i\in \cB (v_j)|v_j\in R) Pr(v_j\in R)\\ &~~~~~+  \Pr (v_i\in \cB (v_j)| v_j\not\in R) Pr(v_j\not\in R)\\
&\geq  \pi r^2 (1-2r)^2 + \frac{\pi r^2}{4}  4r(1-r) \\
& =  \pi r^2 (1-3r + 3r^2) = \pi r^2 - o(r^2).
\end{align*}
Besides, 
\begin{align*}
\Pr (v_i\in \cB (v_j)) &= \Pr (v_i\in \cB (v_j)|v_j\in R) Pr(v_j\in R)\\ &~~~~~+  \Pr (v_i\in \cB (v_j)| v_j\not\in R) Pr(v_j\not\in R)\\
&\leq \Pr (v_i\in \cB (v_j)|v_j\in R) Pr(v_j\in R) = \pi r^2 (1-2r)^2  \leq  \pi r^2.
\end{align*}
\end{proof}
%
\begin{proposition}\label{prop:main1h=2} 
Let $G(n, r(n), 2)$ be a two-layered random geometric graph, with layers $G_1(n,r(n))$ and
$G_2(n,r(n))$.\\
{\bf Lower bound:} If  
$$
r(n)\ge 0.68\left(\frac{\log n}{n}\right)^{1/4},
$$
then \whp $G(n, r(n), 2)$ is rainbow connected.\\
{\bf Upper bound:} If
$$
r(n)\le 0.56 \left(\frac{\log n}{n}\right)^{1/4},
$$
then \whp $G(n, r(n), 2)$ is not rainbow connected.
\end{proposition}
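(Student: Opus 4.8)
\textbf{Proof plan for Proposition~\ref{prop:main1h=2}.}

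The plan is to handle the two bounds separately, each by a first/second moment style argument applied to the right ``bad event''. For the \textbf{lower bound} (the graph \emph{is} rainbow connected), the natural bad event is that some ordered pair $(u,v)$ admits no rainbow path, and since $h=2$ a rainbow path is simply a path using one red edge and one blue edge (or a single edge of either color). So I would first reduce to showing that \whp every pair $u,v$ has a common ``bridge'' vertex $w$ with $w$ adjacent to $u$ in one layer and to $v$ in the other layer — i.e.\ $w\in \cB_1(u)\cap\cB_2(v)$ or $w\in\cB_2(u)\cap\cB_1(v)$, a path $u$--$w$--$v$ of length two with distinct colors. Fix $u,v$. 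Conditioning on the positions $b_1^u$ and $b_2^v$, for a third vertex $w$ the events $w\in\cB_1(u)$ and $w\in\cB_2(v)$ are independent (they depend on $w$'s position in layer $1$ and layer $2$ respectively), each occurring with probability at least $\pi r^2 - o(r^2)$ by Equation~\eqref{eq:basicprob} (using the boundary refinement is what lets us push the constant down to $0.68$ rather than needing the crude $\pi r^2/4$). Hence $\Pr(w\in\cB_1(u)\cap\cB_2(v))\ge (\pi r^2 - o(r^2))^2$, and the probability that \emph{no} such $w$ exists among the $n-2$ other vertices is at most $\big(1-(\pi r^2-o(r^2))^2\big)^{n-2}\le \exp(-(1-o(1))\pi^2 r^4 n)$. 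A union bound over the $\binom n2$ pairs gives failure probability at most $n^2\exp(-(1-o(1))\pi^2 r^4 n)\to 0$ provided $\pi^2 r^4 n \ge (2+\varepsilon)\log n$, i.e.\ $r\ge \big(\tfrac{2}{\pi^2}\big)^{1/4}(\tfrac{\log n}{n})^{1/4}(1+o(1))$; since $(2/\pi^2)^{1/4}\approx 0.6714<0.68$, the stated constant suffices. A small technical point I would spell out: the events ``$w\in\cB_1(u)\cap\cB_2(v)$'' for different $w$ are independent given $b_1^u,b_2^v$, so the product bound is legitimate; and the $o(r^2)$ error is uniform since $r\to0$.

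For the \textbf{upper bound} (the graph is \emph{not} rainbow connected \whp), I would exhibit a pair of vertices with no rainbow path, via a second-moment / expectation argument. The cleanest obstruction: a pair $u,v$ such that in \emph{both} layers $u$ and $v$ are isolated, or more weakly such that no length-$\le 2$ bichromatic path exists and no monochromatic edge joins them — but to get the sharp constant $0.56$ one should identify the \emph{dominant} reason rainbow connectivity fails at this radius. At $r\asymp(\log n/n)^{1/4}$ we have $nr^2\asymp\sqrt{n\log n}\to\infty$, so individual balls are large; the bottleneck is instead that a \emph{specific} short rainbow route is missing. Concretely I would count pairs $(u,v)$ that are non-adjacent in both layers (so no length-$1$ rainbow path) \emph{and} have no common neighbour of the bichromatic type (no length-$2$ rainbow path); longer rainbow paths are impossible for $h=2$, so such a pair certifies non-rainbow-connectivity. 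Let $X$ be the number of such pairs. Using $\Pr(v\notin\cB_k(u))\le 1-\pi r^2(1-2r)^2$ and, for each other $w$, $\Pr(w\notin\cB_1(u)\cap\cB_2(v)\text{ and }w\notin\cB_2(u)\cap\cB_1(v))$, one computes $\E X = \binom n2(1-o(1))\,(1-\pi r^2)^2\cdot\prod_{w}\big(1-2\pi^2 r^4(1+o(1))\big) \approx \tfrac{n^2}{2}\exp(-2\pi^2 r^4 n)$, which tends to infinity when $2\pi^2 r^4 n\le (2-\varepsilon)\log n$, i.e.\ $r\le \big(\tfrac1{\pi^2}\big)^{1/4}(\tfrac{\log n}{n})^{1/4}$; note $(1/\pi^2)^{1/4}\approx 0.5642>0.56$. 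Then a second-moment computation, $\Pr(X=0)\le \operatorname{Var}(X)/(\E X)^2\to 0$, finishes it; the pairwise correlations here are mild because the ``bad pair'' event for $(u,v)$ and $(u',v')$ becomes independent once the four distinguished balls are disjoint, and overlapping cases contribute a lower-order term, exactly as in the classical isolated-vertex threshold for $G(n,r)$.

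The step I expect to be the main obstacle is the \textbf{second-moment estimate in the upper bound}, specifically controlling $\operatorname{Var}(X)$. The events ``$(u,v)$ is bad'' and ``$(u',v')$ is bad'' share witnesses in intricate ways: when $\{u,v\}\cap\{u',v'\}\ne\emptyset$ the two balls coincide, and even when the pairs are disjoint a single third vertex $w$ can simultaneously witness (or fail to witness) both pairs, so the product structure is not exact. I would organize $\operatorname{Var}(X)=\sum_{(u,v),(u',v')}\big(\Pr(\text{both bad})-\Pr(\text{bad})^2\big)$ by the size of the overlap $|\{u,v\}\cap\{u',v'\}|\in\{0,1,2\}$ and show the $|\text{overlap}|\ge1$ terms are $O(n^3)\cdot$(typical probability) $= o((\E X)^2)$, while for disjoint pairs the conditional independence given the four relevant position-coordinates makes the covariance essentially zero up to the $o(r^2)$ corrections. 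This is the standard but slightly delicate computation; a secondary nuisance is being careful that the $o(r^2)$ terms from Equation~\eqref{eq:basicprob}, when raised to the $n$-th power inside the product over $w$, only shift the constant by a factor $1+o(1)$ and hence do not eat into the gap between $0.56$ and $(1/\pi^2)^{1/4}\approx 0.5642$ (respectively between $0.68$ and $(2/\pi^2)^{1/4}\approx 0.6714$ in the lower bound), which is why the stated constants are not tight to four digits.
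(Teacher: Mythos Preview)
Your lower-bound argument is essentially the paper's: both bound the probability that a fixed third vertex fails to bridge $(u,v)$ by $1-(\pi r^2-o(r^2))^2$, take the product over the $n-2$ candidates, and finish with a union bound (equivalently, Markov on the count of bad pairs), obtaining the constraint $b^4\pi^2>2$ so that $b=0.68$ suffices.

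For the upper bound the routes genuinely diverge. The paper does \emph{not} run a second moment on the count $X$ of bad pairs. Instead it fixes one vertex $v_i$, uses Chernoff to pin $|\cB_1(v_i)|$ near its mean, and for each $v_j$ lower-bounds $\Pr\big(\cB_2(v_j)\cap(\cB_1(v_i)\cup\{v_i\})=\emptyset\big)\ge(1-\pi r^2)^{|\cB_1(v_i)|+1}$, arriving at $\E(Y_{v_i})\gtrsim n\,e^{-\pi^2 nr^4}\to\infty$ when $c<\pi^{-1/2}\approx0.564$. The intended payoff of this conditioning is precisely to bypass the variance bookkeeping you flag as the main obstacle: once one conditions on the layer-$1$ configuration and on the layer-$2$ positions of the vertices in $\cB_1(v_i)$, the indicators $Y_{v_i,v_j}$ depend only on the independent layer-$2$ positions of the $v_j$, so concentration of $Y_{v_i}$ comes for free without any pair-of-pairs analysis. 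That said, as actually written the paper handles only \emph{one} orientation of the length-$2$ rainbow path and stops at $\E(Y_{v_i})\to\infty$ without spelling out the concentration step, so your second-moment plan is in fact the more complete of the two; your overlap sketch is sound (for a shared-endpoint configuration $(u,v),(u,v')$ in general position the joint bad probability is $\approx(1-4\pi^2r^4)^n\approx p^2$, so the $O(n^3)$ such terms contribute $o((\E X)^2)$), and both routes land on the same threshold constant.
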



\begin{proof} 
Recall that for multilayered geometric graphs,
the property of being rainbow connected  is monotone, on $r$. Therefore, it is enough to prove the statement for the two extreme values of the radius.

Proof of the lower bound: 
Let $r= b\left(\frac{\log n}{n}\right)^{1/4}$, 
where $b$ is a constant to be determined latter.

For each pair  $v_i,v_j\in V$, let  $X_{v_i,v_j}$ be the indicator random variable
\[
X_{v_i,v_j}=
\begin{cases}
1 & \text{if there is no  rainbow path  between $v_i$ and $v_j$ in $G$}\,,\\
0 & \text{otherwise}\,.
\end{cases}
\]
For $v_k\in V\setminus\{v_i,v_j\}$, let $A_{v_k}$  be the event that  $v_k$  is joined to $v_i$ in $G_1(n,r)$ and to $v_j$ in $G_2(n,r)$  
or vice-versa, namely, 
$$A_{v_k}=\{\{v_i\in\cB_1(v_k)\} \cap \{v_j\in\cB_2(v_k)\}\}\cup
\{\{v_j\in\cB_1(v_k)\} \cap \{v_i\in\cB_2(v_k)\}\}\,.
$$
Using Equation~\eqref{eq:basicprob} we have 
\begin{equation}\label{eq:k}
(\pi r^2-o(r^2))^2\le \Pr (A_{v_k})\le 2(\pi r^2)^2\,.
\end{equation}
Let $A_{v_i,v_j}$ denote the event that $v_i$ and 
$v_j$ are joined by an edge in either $G_1(n,r)$ or in $G_2(n,r)$, that is
\begin{equation*}
A_{v_i,v_j}=\{v_i\in \cB_1(v_j)\} \cup \{v_i\in \cB_2(v_j)\}\,,
\end{equation*}
so that 
\begin{equation}\label{eq:ij}
\pi r^2-o(r^2)\le \Pr (A_{v_i,v_j})\le 2\pi r^2\,.
\end{equation}

For given $v_i$ and $v_j$, the event that  they are joined by a rainbow path in $G$ is $(\cup_{k\neq i,j}A_{v_k})\cup A_{v_i, v_j}$. Therefore, since for $v_k\in V\setminus \{v_i,v_j\}$ the events $A_{v_k}$  and $A_{v_i,v_j}$ are independent, using the lower bounds in equations~(\ref{eq:k}) and (\ref{eq:ij}), we have 
\begin{align*}
\E(X_{v_i,v_j}) 
& = \Pr(\overline{(\cup_{v_k\neq v_i,v_j}A_{v_k})\cup (A_{v_i,v_j})}) \\
&= \Pr ((\cap_{v_k\neq v_i,v_j}\overline{A_{v_k}})\cap \overline{(A_{v_i,v_j}})) \\
& \le  (1-(\pi r^2-o(r^2))^2)^{n-2} (1- (\pi r^2-o(r^2))) \\
& \le (1-\pi^2 r^4+o(r^4))^n\,. 
\end{align*}

Recall that $r= b\left(\frac{ln (n)}{n}\right)^{1/4}$ and  
 define $X=\sum_{i<j} X_{v_i,v_j}$ as the random variable that counts the number of pairs 
 $\{v_i,v_j\}$ in $G$, which are not joined by a rainbow path.  Plugging  the value of $r$ in the  expression for $\E(X_{v_i,v_j})$, we obtain 
\begin{align*}
\E(X) =\sum_{i<j}\E (X_{v_i,v_j})&\le {n\choose 2}\left(1-(\pi^2 r^4)+o(r^4)\right)^n \\
& \le e^{2\log n}\left(1-\pi^2b^4\frac{\log n}{n}+o\left(\frac{\log n}{n}\right)\right)^n\\
& \le  e^{(2-b^4\pi^2)\log n+o(\log n)}\,.
\end{align*}
To obtain the value of the constant $b$, we need $\E(X)\to 0$ as $n\to \infty$,  therefore
$2- \pi^2b^4>0$, which implies that $b=0.68$ suffices.

 By Markov's inequality, we get  
 \begin{equation*}
\Pr(X\ge 1)\le \E(X)\to 0, \text{~as~} n\to\infty. 
 \end{equation*}
 Therefore,  \whp the graph $G(n,r(n),h)$ is  rainbow connected for $$r(n)\ge 0.68 \left(\frac{ln (n)}{n}\right)^{1/4}.$$
\vspace*{0.3cm}

To prove  the upper bound for $r$ so that $G(n,r,h)$ is not rainbow connected, consider $r = c(\log n/n)^{1/4}$, where  the value of   constant $0<c<0.68$ will be specified later.
Given a vertex $v_i$,   define a random variable $Y_{v_i}$ that counts  the vertices  $v_j\neq v_i$, such that there is no rainbow path between $v_j$ and $v_i$. We wish to prove   that for the given values of $r$,  $\E(Y_{v_i})\to \infty$ 
as $n\to \infty$.

Given $v_i\in V$ recall the   random variable $Z_{v_i}= |\cB_1(v_i)|$. Let $\mu_i= \E(Z_{v_i})$. From equations~(\ref{eq:expetball}) and (\ref{eq:chernoff}) we get  that 
$\frac{n\pi r^2}{4}\le \ \mu_i\le n\pi r^2$ and that  the distribution of $Z_{v_i}$ is concentrated around $\mu_i$. 

Given vertices $v_i$ and $v_j$ with $v_i\not= v_j$, define the indicator random variable that there is no rainbow path between $v_i$ and $v_j$:
\[
Y_{v_i,v_j}=
\begin{cases}
1 & \text{if~}  \cB_1 (v_i) \cap \cB_2 (v_i)=\emptyset \,,\\
0 & \text{otherwise}\, .
\end{cases}
\]
%
Computing probabilities:
\begin{equation*}\label{eq:yij}
\begin{split}
\Pr(Y_{v_i,v_j}=1) &= \sum_{k=1}^n \Pr(Y_{v_i,v_j}=1\,\mid Z_{v_i}=k)\cdot \Pr(Z_{v_i}=k) \\ 
&= \sum_{k=1}^n \left(1- \pi r^2\right)^{k+1}\cdot  \Pr(Z_{v_i}=k) \\
&\ge \sum_{k=\lceil\frac{1}{2}\mu_i\rceil}^{\lfloor\frac{3}{2}\mu_i\rfloor} (1-\pi r^2)^{k+1}\cdot \Pr(Z_{v_i}=k)\, .
\end{split}
\end{equation*}

Notice that as $(1-\pi r^2) < 1$, asymptotically  the terms of the sequence  
$\{(\pi r^2)^k\}_{k=1}^n$  decrease as $k$ increases. Therefore, taking $\epsilon=1/2$ in  Equation~(\ref{eq:chernoff}) 
to bound $\Pr(|Z_{v_i}-\mu_i|> \mu_i/2)$, we have
\begin{equation}\label{eq:Eyij}
\begin{split}
\Pr(Y_{v_i,v_j}=1) &\ge \sum_{k=\lceil\frac{1}{2}\mu_i\rceil}^{\lfloor\frac{3}{2}\mu_i\rfloor}
(1-\pi r^2)^{k+1}\cdot
\Pr(Z_{v_i}=k) \\ %
&\ge (1-\pi r^2)^{\lceil\frac{1}{2}\mu_i\rceil}\cdot
\sum_{k=\lceil\frac{1}{2}\mu_i\rceil}^{\lfloor\frac{3}{2}\mu_i\rfloor}\Pr(Z_{v_i}=k) \\ %
&\ge (1-\pi r^2)^{\lceil\frac{1}{2}\mu_i\rceil}\cdot \left(1-\Pr\left(|Z_{v_i}-\mu_i|)>\frac{1}{2}\mu_i\right)\right)\\%
&\ge ((1-\pi r^2)^{\lceil\frac{1}{2}\mu_i\rceil}\left(1-2e^{\frac{\mu_i}{12}}\right) \\
&\ge ((1-\pi r^2)^{\frac{1}{2}n\pi r^2}\cdot \left(1-2e^{\frac{\mu_i}{12}}\right)\, .
\end{split}
\end{equation}

Given a vertex $v_i$, we define the random variable $Y_{v_i}$ that counts the number of vertices $v_j\in V\setminus \{v_i\}$, for which there is no rainbow path between $v_j$ and $v_i$, 

Notice that given $v_i$ in $G_1(n, r(n))$, for all  $v_j\in V\setminus {v_i}$, then
 $Y_{v_i}=\sum_{v_j}Y_{v_i,v_j}$. 
 
 So that from Equation~(\ref{eq:Eyij}) we get
\begin{equation}\label{eq:EY_i}
\E(Y_{v_j})= 
\sum_{v_j\not\in V\setminus\{v_i\}}\E(Y_{v_i,v_j})
\ge (n-1)(1-\pi r^2)^{\frac{1}{2}n\pi r^2}
\cdot \left(1-2e^{\frac{\mu_i}{12}}\right).
\end{equation}

Recall that for $0\le x\le 0.8$ we have 
$1-x\ge e^{-2x}$. Using this inequality together with the bounds in Equation (\ref{eq:expetball}), in Equation~(\ref{eq:EY_i}), we get
\begin{equation*}\label{eq:final}
\begin{split}
\E(Y_{v_j})&\ge e^{\ln(n-1)}\cdot 
e^{(-2\pi r^2)\cdot(\frac{1}{2}n\pi r^2)}
\cdot \left(1-2e^{\frac{\mu_i}{12}}\right)\\
&=e^{\underbrace{\ln(n-1)(-\pi r^2)(n\pi r^2)}_{(i)}}
\cdot \underbrace{\left(1-2e^{\frac{\mu_i}{12}}\right)}_{(ii)}
\end{split}
\end{equation*}

Substituting $r= c(\frac{\ln n}{n})^{1/4}$ in  $(i)$ and  $(ii)$, we get:
\begin{equation*}
(i) = \ln(n-1)-\pi^2 c^4\ln n\, 
\end{equation*}
which, provided that $(1-\pi^2 c^4)>0$ goes to infinity as $n\to \infty$. 
Notice the condition $(1-\pi^2 c^4)>0$ implies that $c<0.56$.

Regarding the expression $(ii)$ in Equation~\eqref{eq:final}, we have 
\begin{equation*}
\left(1-2e^{\frac{\mu_i}{12}}\right)\ge 
\left(1-2e^{-0.00257\frac{\ln n}{n}}\right),
\end{equation*}
which tends to 1 as $n\to \infty$.

Therefore, we conclude that if  $c\le 0.56$, then $\E(Y_{v_i})\to \infty$ as $n\to \infty$, which concludes the second part of the proof.
    \end{proof}
%
\section{Local Expansion}\label{sec:exp}

A key property of multilayered random geometric graphs is their local expanding properties in the sense of  Proposition~\ref{prop:exp} below. In this section, we use the following concentration result for the size of the image of a random map. For completeness, we include a proof of this statement. 
%
\begin{lemma}\label{prop:sizerm} Let $m$ and $k$ be positive integers, with $m>k$, and let $g:[m]\to [k]$ be a  map in which for $i\in [m]$, $g(i)\in [k]$ is chosen uniformly and independently  at random (\uar). Let  $Y=|g([m])|$ be a random variable estimating the size of the image of $g$. For every $a>0$, we have that
\begin{equation}\label{eq:McD}
\Pr (|Y-\E(Y)|\ge a)\le 2e^{-2a^2/m}.
\end{equation}
Moreover, 
\begin{equation}\label{eq:McD2}
\Pr (|Y-m|> a)\le 2e^{-2\frac{(a-m^{2}/2k)^2}{m}}
\end{equation}
\end{lemma}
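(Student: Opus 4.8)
The plan is to prove the two concentration bounds in Lemma~\ref{prop:sizerm} using the bounded differences (McDiarmid) inequality applied to the function $Y = |g([m])|$ viewed as a function of the $m$ independent choices $g(1),\dots,g(m)$.

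\textbf{Step 1: Bounded differences.} First I would observe that $Y$ satisfies the Lipschitz condition with constant $1$: if we change the value of a single $g(i)$ while keeping all other $g(j)$ fixed, the size of the image $|g([m])|$ changes by at most $1$ (we may lose at most one element that was uniquely hit by $i$, and gain at most one new element). Hence McDiarmid's inequality gives, for every $a>0$,
\begin{equation*}
\Pr(|Y - \E(Y)| \ge a) \le 2 e^{-2a^2/m},
\end{equation*}
which is exactly \eqref{eq:McD}.

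\textbf{Step 2: Estimating $\E(Y)$.} For \eqref{eq:McD2} I need to relate $\E(Y)$ to $m$. The standard computation: by linearity over the $k$ possible image values, $\E(Y) = k\bigl(1 - (1-1/k)^m\bigr)$. Then I would use the elementary inequality $(1-1/k)^m \ge 1 - m/k + \binom{m}{2}/k^2 - \cdots$; more simply, a clean two-sided bound is
\begin{equation*}
m - \frac{m^2}{2k} \le \E(Y) \le m,
\end{equation*}
where the upper bound $\E(Y)\le m$ is obvious (at most $m$ distinct values can appear) and the lower bound follows from $(1-1/k)^m \le 1 - m/k + \binom{m}{2}/k^2 \le 1 - m/k + m^2/(2k^2)$, so $k(1-(1-1/k)^m) \ge m - m^2/(2k)$. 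This gives $0 \le m - \E(Y) \le m^2/(2k)$.

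\textbf{Step 3: Combine.} Now if $|Y - m| > a$ then $|Y - \E(Y)| > a - (m - \E(Y)) \ge a - m^2/(2k)$, so applying \eqref{eq:McD} with threshold $a - m^2/(2k)$ (assuming it is positive; otherwise the bound is trivial since the right-hand side exceeds $1$) yields
\begin{equation*}
\Pr(|Y - m| > a) \le 2 e^{-2(a - m^2/(2k))^2/m},
\end{equation*}
which is \eqref{eq:McD2}.

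\textbf{Main obstacle.} There is no deep obstacle here; the only care needed is (i) verifying the bounded-differences constant is genuinely $1$ (a short combinatorial argument about how the image set can change), and (ii) getting the right elementary inequality for $(1-1/k)^m$ so that the deficit $m - \E(Y)$ is cleanly bounded by $m^2/(2k)$ rather than something messier. I would also remark that the bound \eqref{eq:McD2} is only useful in the regime $m^2/(2k) < a \ll m$, i.e., when $m = o(\sqrt{k})$-ish, which is precisely the regime in which it will be applied later (ball sizes across layers are much smaller than the vertex set).
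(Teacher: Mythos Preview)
Your proposal is correct and follows essentially the same route as the paper: McDiarmid's inequality with Lipschitz constant $1$ for \eqref{eq:McD}, then the bound $m - m^2/(2k) \le \E(Y) \le m$ combined with the triangle inequality for \eqref{eq:McD2}. The only cosmetic difference is that the paper obtains the lower bound on $\E(Y)$ via $(1-1/k)^m \le e^{-m/k}$ and $1-e^{-x}\ge x - x^2/2$, whereas you use the direct polynomial estimate $(1-1/k)^m \le 1 - m/k + \binom{m}{2}/k^2$; both are valid and yield the same conclusion.
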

\begin{proof}  Let $f:[k]^m\to [k]$ 
be the function that, for an $m$-tuple $(x_1,\ldots ,x_m)$, 
assigns the cardinality of the set $\{x_1,\ldots ,x_m\}$, i.e.,  $|\cup_{i=1}^m \{x_i\}|$ . The function $f$ satisfies the {\em bounded differences condition}: for any $t\in [m]$,  
$$
\max_{x,x'\in [k]}|f(x_1,\ldots ,x_{t-1},x,x_{t+1},\ldots ,x_m)-f(x_1,\ldots ,x_{t-1},x',x_{t+1},\ldots ,x_m)|\le 1.
$$

Let $g:[m]\to [k]$  by and independent random map. Define the random variable $X_i= g(i)$, for $i\in [m]$, and let $Y=f(X_1,\ldots ,X_m)$ be the random variable counting the number of elements in $\text{Im}(g)$. Then,


\begin{equation*}
\E(Y) = \E(f(X_1,\ldots, X_m)).
\end{equation*}

Hence, as the sum of the differences of $f$ is at most  $m$, using 
McDiarmid's inequality~\cite{McDiarmid1989}, we have that $\forall a>0$
\begin{equation*}
\E(|Y-\E(Y)|\ge a)\le 2e^{-2a^2/m},
\end{equation*}
which proves  Equation~\eqref{eq:McD}. 

To prove Equation~\eqref{eq:McD2}, for each $j\in [k]$ consider the indicator random variable 
\[
Y_j=
\begin{cases}
1, & \text{if}\;  j\in \text{Im}(g)\,,\\
0, & \text{otherwise}\,.
\end{cases}
\]
 Then $Y=Y_1+\cdots +Y_{k}$ and  
$$
m\ge \E (Y)=k\left(1-\left(1-\frac{1}{k}\right)^m\right)\ge 
k(1-e^{-\frac{m}{k}})\ge \frac{km}{k}-\frac{m^2}{2k^2}=m-\frac{m^2}{2 k}.
$$
It follows that, using $Y\le m$ together with the triangle inequality, we have the following
$$
\Pr (|Y-m|\ge a)\le \Pr \left(|Y-\E(Y)|\ge a-\frac{m^2}{2k}\right)\le 2e^{\frac{-2(a-\frac{m^2}{2k})^2}{m}}\, .
$$
\end{proof}

Let $G=G(n,r(n),h)$ be a multilayered random geometric graph.  For a permutation $\sigma$ in the set of colors $[h]$, $u\in V$. A \emph{$\sigma$-rainbow path}  is a path in which the colors in the path follow the sequence $\sigma(1), \sigma(2),\dots$. 

For $1\leq \ell\leq h$, let  
\begin{equation}\label{eq:Nell}
\begin{split}
\cN_{\ell,\sigma}(u)=\{v\in G\, |\, & \exists~ \,
\text{a $\sigma$-rainbow path of length $\ell$ from $u$ but} \\
&\nexists~ \text{a $\sigma$-rainbow path of length $\ell-1$ from $u$}\}\, .
\end{split}
\end{equation}
In   the case where   $\sigma$ is the identity, we simply write $\cN_{\ell}(u)$.
\begin{proposition}\label{prop:exp} Let $h\ge 3$ be fixed, let $\sigma$ be a permutation of $[h]$. Let $G=G(n,r(n),h)$ be a multilayered random geometric graph with 
$$
r(n) = \Theta\left(\frac{\log n}{n^{h-1}}\right)^{\frac{1}{2h}},
$$
and let $u\in V(G)$.

Then, for $1\le \ell\le h-1$,  we have \whp 
$$
\frac{1}{4^\ell} \frac{\pi}{2}(nr^2)^\ell \leq |\cN_{\ell,\sigma}(u)| \leq  \left(\frac{3\pi}{2}\right)^\ell (nr^2)^\ell. 
$$
\end{proposition}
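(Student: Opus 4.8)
The plan is to prove the two bounds simultaneously by induction on $\ell$, revealing the layers one at a time. Fix $u$ and write $\cN_\ell = \cN_{\ell,\sigma}(u)$; by relabeling colors we may assume $\sigma$ is the identity. Condition on the value of $\cN_{\ell-1}$, say $|\cN_{\ell-1}| = m$ with $m$ in the inductive range $\big[\tfrac{1}{4^{\ell-1}}\tfrac{\pi}{2}(nr^2)^{\ell-1},\ (\tfrac{3\pi}{2})^{\ell-1}(nr^2)^{\ell-1}\big]$. The set $\cN_\ell$ is obtained by looking at color-$\ell$ edges from $\cN_{\ell-1}$ to the remaining vertices $V \setminus (\cN_{\leq \ell-1} \cup \{u\})$, and keeping only those endpoints not already reachable by a shorter $\sigma$-rainbow path. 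The key point (stressed in the paper's introduction) is that positions in layer $\ell$ are independent of everything revealed so far, so conditionally on $\cN_{\ell-1}$ the layer-$\ell$ adjacencies are fresh randomness.

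For the \textbf{upper bound}, I would simply bound $|\cN_\ell|$ by the total number of layer-$\ell$ neighbors of $\cN_{\ell-1}$: each vertex $w \in V \setminus (\cN_{\leq \ell-1}\cup\{u\})$ lies in $\cN_\ell$ only if $w \in \cB_\ell(x)$ for some $x \in \cN_{\ell-1}$, which by a union bound over $x$ happens with probability at most $m\,\pi r^2$ (using Equation~\eqref{eq:basicprob}). So $\E(|\cN_\ell| \mid \cN_{\ell-1}) \leq n m \pi r^2 \leq (\tfrac{3\pi}{2})^{\ell-1}(nr^2)^{\ell-1}\cdot \pi nr^2$, and since $\pi \le \tfrac{3\pi}{2}$ this is at most $(\tfrac{3\pi}{2})^\ell (nr^2)^\ell$ after absorbing a $(1+o(1))$ factor; the parameter choice $nr^2 = \Theta((\log n)^{1/h} n^{1/h})$ guarantees $nr^2 \to \infty$, so $(nr^2)^\ell$ grows at least polynomially in $n$ and a Chernoff/Azuma bound on the sum of (almost) independent indicators gives concentration tight enough to keep the multiplicative constant. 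One must be slightly careful that the indicators $\mathbf{1}[w\in\cN_\ell]$ over the $\approx n$ choices of $w$ are independent given $\cN_{\ell-1}$ (they are, since they depend on disjoint sets of layer-$\ell$ positions), so a standard Chernoff bound applies directly.

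For the \textbf{lower bound}, the subtlety is the "not reachable by a shorter path" clause: I would handle it by noting $|\cN_{\leq \ell-1}\cup\{u\}| \le \sum_{j\le \ell-1}(\tfrac{3\pi}{2})^j(nr^2)^j = O((nr^2)^{\ell-1}) = o(n)$, so discarding those vertices costs a negligible fraction. Then for each of the $\ge n - o(n)$ eligible $w$, the probability $w$ has \emph{some} layer-$\ell$ neighbor in $\cN_{\ell-1}$ is, by inclusion--exclusion or by Bonferroni, at least $m(\pi r^2 - o(r^2)) - \binom{m}{2}(\pi r^2)^2 \ge m\pi r^2(1 - o(1))$ since $m\pi r^2 = O((nr^2)^\ell/n) = o(1)$ — wait, one must check $m\cdot (\pi r^2)^2 \cdot m = o(m \pi r^2)$, i.e. $m\pi r^2 = o(1)$, which holds because $(nr^2)^{\ell} = o(n)$ for $\ell \le h-1$ (indeed $(nr^2)^{h-1} = \Theta(n^{(h-1)/h}(\log n)^{(h-1)/h}) = o(n)$). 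This is exactly where the exponent in $r(n)$ and the restriction $\ell\le h-1$ are used. Thus $\E(|\cN_\ell|\mid \cN_{\ell-1}) \ge (n-o(n))\cdot m\pi r^2(1-o(1)) \ge \tfrac{1}{4^{\ell-1}}\tfrac{\pi}{2}(nr^2)^{\ell-1}\cdot n r^2 \cdot \tfrac{\pi}{4}(1-o(1))$ roughly, and I would track the constant to land at $\tfrac{1}{4^\ell}\tfrac{\pi}{2}(nr^2)^\ell$ — the factor $\tfrac14$ per step comes from the boundary-effect lower bound $\pi r^2/4$ on $\Pr(w\in\cB_\ell(x))$ (one uses the corner bound to be safe, rather than $\pi r^2$). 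Concentration again follows from Chernoff since the relevant indicators are conditionally independent.

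The \textbf{main obstacle} is bookkeeping the conditioning correctly across $\ell$ steps: a union bound over all $h!$ permutations $\sigma$, all $n$ starting vertices $u$, and all $h-1$ values of $\ell$ must be absorbed into the w.h.p. statement, so at each step the failure probability must be $o(1/n^{C})$ for a large constant — this is fine because $(nr^2)^\ell$ is polynomially large in $n$, making the Chernoff bounds superpolynomially small, but it forces us to carry the "good event" (that $|\cN_{\ell-1}|$ is in range) as a conditioning hypothesis and argue the induction step only on that event. A secondary care point is that the different $\cN_j$ are nested/disjoint and their union stays $o(n)$, which is what lets us treat "eligible vertices" as essentially all of $V$ at every level; this again relies on $\ell \le h-1$.
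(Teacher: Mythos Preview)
Your overall induction-on-$\ell$ plan, with layer-by-layer revealing (the paper calls this the principle of deferred decisions), matches the paper, and your upper-bound step is essentially the same as theirs: bound $|\cN_\ell|$ by the number of eligible vertices falling in a region of area at most $m\pi r^2$, then Chernoff with $\epsilon=\tfrac12$.

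There is, however, a real gap in your lower-bound step. Your Bonferroni computation correctly lower-bounds the \emph{unconditional} probability that a given eligible $w$ has a layer-$\ell$ neighbour in $\cN_{\ell-1}$, hence the unconditional expectation of $|\cN_\ell|$. But the Chernoff step does not follow from this. The indicators $\mathbf{1}[w\in\cN_\ell]$ are \emph{not} independent: they all depend on the layer-$\ell$ positions of the $m$ vertices of $\cN_{\ell-1}$. They become independent only after conditioning on those positions, and then their common success probability is the (random) area of the union $\bigcup_{x\in\cN_{\ell-1}}\cB_\ell(x)\cap I^2$. Your argument gives no control on this random area, so you cannot plug a deterministic lower bound into Chernoff.

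This is precisely the step the paper works for. It discretises $I^2$ into $k=2/r^2$ subsquares of diagonal $r$ and applies Lemma~\ref{prop:sizerm} (McDiarmid on the image size of a random map $[m]\to[k]$) to show that \whp the number of distinct subsquares hit by $\cN_{\ell-1}$ is at least $|\cN_{\ell-1}|-n^{\gamma/h}$ for a suitable $\gamma$, using that $m^2/k=O((nr^2)^{2(\ell-1)}r^2)=o(n^{\gamma/h})$. Only after this concentration of the covered area is established does the paper run the binomial/Chernoff argument for the eligible vertices. Your Bonferroni shortcut bypasses this, and that is where the proposal breaks. (A secondary wrinkle: you waver between the refined lower bound $\pi r^2-o(r^2)$ and the corner bound $\pi r^2/4$ when justifying the factor $\tfrac14$; the paper's $\tfrac14$ actually comes from the subsquare area $r^2/2$ together with the Chernoff factor $\tfrac12$.)
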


\begin{proof}
For $n$ large enough, we can assume that $r(n) = d\left(\frac{\log n}{n^{h-1}}\right)^{\frac{1}{2h}},$ for some constant $d>0$. In the following, without loss of generality, we prove the result when $\sigma$ is the identity.
First note that our radius selection guarantees that 
for $ 1\leq t\leq h-1$,
$(nr^2)^t= d^t (n\ln n) t/h = o((nr^2)^{t})$.

To prove the result, we use the {\em principle of deferred decision}~\cite{Knuth97}, and  define two random processes that provide \whp upper and lower bounds  for the growth rate of  $\cN_\ell(u)$ with respect to  $\cN_{\ell-1}(u)$, for $1\leq \ell<h$.


Define  $\cN_0(u)=\{u\}$. To construct $\cN_1(u)$, throw \uar the  vertices  in $V$ on $I^2$, the unit square for $G_1(n,r)$.  The vertices falling at Euclidean distance $\le r$ from $u$ form the set $\cN_1(u)=\cB_1(u)$.
From  equations \eqref{eq:expetball} and \eqref{eq:chernoff} taking $\epsilon = 1/2$, we have that  \whp 
$$\frac{1}{8} \pi nr^2 \leq |\cN_1(u)|\leq \frac{3}{2} \pi n r^2.$$

Once  $\cN_{\ell-1}(u)$ has been constructed, 
we obtain $\cN_\ell(u)$ by sprinkling \uar the vertices in $\cN_{\ell-1}(u)$ on the $I^2$ which 
defines  $G_{\ell}(n,r)$. Next, we will scatter the vertices in $V\setminus \left(\cup_{t=0}^{\ell-1} \cN_t (u)\right)$  on $I^2$.  We construct  $\cN_{\ell}(u)$ by the  vertices not in $\cN_{\ell-1}(u)$ that are at Euclidean distance at most $r$ from a vertex in $\cN_{\ell-1}(u)$, as they are neighbors in $G_\ell(n,r)$, and we have a rainbow path of length $\ell-1$ from $u$ with the correct color ordering.  
In this way, at the end of the process, we have created $\cN_{h}(u)$. 

\smallskip
To obtain an upper bound to the previous process,  we create from $\cN_{\ell-1}(u)$  a new set of vertices by first marking in red an area of size $|\cN_{\ell-1}(u)| \pi r^2$ in $I^2$ containing the area covered by the disks centered at $v\in \cN_{\ell-1}(u)$. This is equivalent to assume  that the area at distance at most $r$  of any vertex in  $\cN_{\ell-1}(u)$ is the complete disk of radius $r$ centered at the vertex and that all those disks do not intersect. Then, we scatter $n$ points \uar on $I^2$, the vertices not in  $\cN_{\ell-1}(u)$ and some additional points.  

Consider the set $U_{\ell}(u)$ formed by the points that fall within the red area.   In this process, define the random variable $|U_{\ell}(u)|$ counting the expected number of points in the red area. This random variable  follows a binomial distribution, which  is concentrated around its mean value $n|\cN_{\ell-1}(u)| \pi r^2$. Therefore, and using Chernoff's bound with $\epsilon = 1/2$, \whp 
$$|U_{\ell}(u)|\leq \frac{3}{2} \pi |\cN_{\ell-1}(u)|  n r^2.$$  
Recall that the total area in $I^2$ of the disk  around one vertex is upper bounded by $\pi r^2$, then by coupling with the previous process we get $|\cN_\ell(u)| \leq |U_{\ell}(u)|$. 

\smallskip
The second random process provides a lower bound on the increase of $|\cN_{\ell}(u)|$ with respect to the sizes of $\cN_{1}(u), \cdots, \cN_{\ell-1}(u)$, conditioned  on the fact that  \whp  $|\cN_{t}(u)|=\Theta((nr^2)^t)$, for $1\leq t\leq \ell-1$. 

We first subdivide the unit square  into subsquares, each with diagonal of length $r$. Notice  that a disk of radius $r$, centered at any point in a subsquare, covers entirely that subsquare. The number $k$ of subsquares is  $k= \frac{2}{r^2}$, and each subsquare has an area of $\frac{r^2}{2}$. 

We drop \uar the vertices in $\cN_{\ell-1}(u)$  to the unit square, and declare red any subsquare containing  one or more vertices. 

Let $R$ be the random variable counting the number of red subsquares.

Choose a constant $\gamma$ such that  
 $$
\max\{2\ell-1-h,(\ell-1)/2\}<\gamma< \ell-1.
$$ 
Using Proposition~\ref{prop:sizerm} with $m=|\cN_{\ell-1}(u)|$,  $k= \frac{2}{r^2}$, $a=n^{\gamma/h}$ and $\gamma<\ell-1$, 
\begin{equation*}\label{eq:pj-1}
\Pr(R \geq  |\cN_{\ell-1}(u)|- n^{\gamma/h} )\ge 1-2\exp \left(-2\frac{(n^{\gamma/h}-|\cN_{\ell-1}(u)|^2/k)^2}{|\cN_{\ell-1}(u)|}\right).
\end{equation*}
As  $n^{\gamma/h}=o((nr^2)^{\ell-1})$, then \whp  the number of red subsquares is greater or equal than 
$|\cN_{\ell-1}(u)| - o((nr^2)^{\ell-1})$.
\vspace{0.3cm}

Conditioning on the previous fact, let  $A=V\setminus \left(\cup_{t=0}^{\ell-1} \cN_t (u)\right)$. Recall that in the second process, we scattered \uar the points in $A$ into $I^2$. Let $L_{\ell}(u)$  be the set of points that fall into a red subsquare. Then, $|L_\ell(u)|$   follows a binomial distribution with a mean value 
$$ \mu = |A|\left(|\cN_{\ell-1}(u)|- n^{\gamma/h}\right) \frac{r^2}{2}.$$
Using Chernoff's bound, with $\epsilon =1/2$, we get that 
$$\Pr(|L_\ell(u)| < \frac{1}{2} \mu)\to 0.$$ 

By our conditioning on the number of red square  and taking into account that 
$(nr^2)^{\ell-1} = o(n)$,  the number of vertices in $A$ is \whp 
$$|A|= n - \sum_{t=0}^{\ell-1} |\cN_{t}(u)| = n - o((nr^2)^{\ell-1})\leq n.$$ 

Furthermore,  $|\cN_{\ell-1}(u)|- n^{\gamma/h}\leq |\cN_{\ell-1}(u)|$. 

We conclude that 
$$\Pr\left(|L_\ell(u)| <\frac{1}{2}\, |\cN_{\ell-1}(u)| \, \frac{n r^2}{2}\right) \leq \Pr(|L_\ell(u)| < \frac{1}{2} \mu), $$ 
so \whp 
$$|L_\ell(u)| \geq  \frac{1}{2}|\cN_\ell(u)| \frac{n r^2}{2}.$$   

Notice that the area of a red subsquare is smaller than the area of the parts of the disk of radius $r$ that surround  a vertex inside red  subsquare.  By coupling this process with our initial process, we have  $|L_\ell(u)|\leq \frac{1}{2}|\cN_\ell(u)| \frac{r^2}{2} $. 

Putting all together we have that \whp
$$ \frac{1}{4}|\cN_{\ell-1}(u)| nr^2\leq  |\cN_\ell(u)| \leq  \frac{3}{2} \pi |\cN_{\ell-1}(u)|  n r^2, $$
completing the proof.
\end{proof}

\section{Lower bound for $h\ge 3$}\label{sec:h>2}
We divide the proof of the lower bound in Theorem \ref{thm:main1} according to the parity of $h$.  We start with the odd case.

\begin{proposition}\label{prop:lhodd}
Let $h\geq 3$ be a fixed odd number. 
Let $G=G(n, r(n), h)$ be a layered random geometric graph. 	If
$$
r(n)\ge b\, \left(\frac{\log n}{n^{h-1}}\right)^{\frac{1}{2h}},
$$
for $b >\left(\frac{2^{2+3(h-1)}}{\pi^3}\right)^{\frac{1}{2h}}$, 
then $G$ is \whp rainbow connected.	
\end{proposition}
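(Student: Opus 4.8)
The plan is to prove the lower bound for odd $h\ge 3$ by a second-moment-free union bound, using the local expansion estimates of Proposition~\ref{prop:exp} as the engine. Fix a pair $u,v\in V$; we want to show that \whp every such pair is joined by a rainbow path. The key observation for odd $h$ is the symmetry: writing $h=2m+1$, we let both endpoints grow a rainbow ``half-path'' of length $m$ using disjoint color sets, and then connect the two frontiers with a single edge of the remaining leftover color. More precisely, partition $[h]=\{1,\dots,m\}\cup\{m+1\}\cup\{m+2,\dots,h\}$. Grow from $u$ the set $\cN_{m}(u)$ of endpoints of rainbow paths using colors $1,\dots,m$ in order, and grow from $v$ the set $\cN_{m,\sigma}(v)$ of endpoints of rainbow paths using colors $h,h-1,\dots,m+2$ in order (that is, $\sigma$ a suitable permutation placing those $m$ colors last). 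A rainbow $u$--$v$ path then exists provided some vertex of $\cN_{m}(u)$ is adjacent in layer $G_{m+1}$ to some vertex of $\cN_{m,\sigma}(v)$. By Proposition~\ref{prop:exp} applied with $\ell=m=h-1-m$... wait — note $m\le h-1$ since $h\ge 3$, so the proposition applies and gives \whp $|\cN_m(u)|\ge \frac{1}{4^m}\frac{\pi}{2}(nr^2)^m$ and likewise for $\cN_{m,\sigma}(v)$, while both are also $O((nr^2)^m)=o(n)$.

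The second step is to estimate the probability that the two frontiers fail to be joined by a layer-$(m+1)$ edge, conditioned on the frontier sizes. Here I would be slightly careful: the positions of the vertices of $\cN_m(u)$ in layer $m+1$ and of $\cN_{m,\sigma}(v)$ in layer $m+1$ have not been exposed during the two growth processes (those used layers $1,\dots,m$ and $m+2,\dots,h$ respectively), so by the principle of deferred decisions they are independent uniform points in $I^2$. Taking $s=|\cN_m(u)|$ and $t=|\cN_{m,\sigma}(v)|$, the probability that no layer-$(m+1)$ edge joins the two sets is at most $(1-\tfrac{\pi r^2}{4})^{st}$ (using the boundary-effect lower bound on a single adjacency probability, as in Equation~\eqref{eq:basicprob} or \eqref{eq:expetball}; one must handle the possibility of a vertex shared by both frontiers, but that only helps). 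With $s,t\ge \frac{\pi}{2\cdot 4^m}(nr^2)^m$ we get that this failure probability is at most
\begin{equation*}
\exp\!\left(-\frac{\pi r^2}{4}\cdot\left(\frac{\pi}{2\cdot 4^m}\right)^2 (nr^2)^{2m}\right)
=\exp\!\left(-\frac{\pi^3}{2^{4+4m}}\,\frac{r^2 (nr^2)^{2m}}{ } \right)\cdot(\text{const}),
\end{equation*}
and since $2m=h-1$ we have $r^2(nr^2)^{h-1}=r^{2h}n^{h-1}=b^{2h}\log n$, so the exponent is $-\Theta(b^{2h}\log n)$. Tracking the constant, the bracketed coefficient is $\frac{\pi^3}{2^{4+2(h-1)}}$ times... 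I would recompute carefully, but the point is that it equals $\frac{\pi^3}{2^{?}}b^{2h}$ with the threshold on $b$ chosen exactly so that this exceeds $2$, making the failure probability $o(n^{-2})$.

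The third step is the union bound over all $\binom n2$ pairs: if for each fixed pair the failure probability is $o(n^{-2})$, then \whp no pair fails, i.e.\ $G$ is rainbow connected. One subtlety is that Proposition~\ref{prop:exp} is a ``\whp'' statement for a fixed $u$, and we are using it for all $n$ choices of $u$; so I would want the bound there to actually be $1-o(n^{-2})$ (re-examining its proof, the error terms are exponentially small in powers of $n$, so this is fine, or one can simply state the strengthened bound). The cleanest write-up conditions on the high-probability event that $|\cN_m(w)|=\Theta((nr^2)^m)$ simultaneously for all $w\in V$, and then runs the single-edge connection argument deterministically on the conditioned sizes.

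The main obstacle I expect is bookkeeping the constants so that the threshold $b>\left(\frac{2^{2+3(h-1)}}{\pi^3}\right)^{1/(2h)}$ comes out exactly — in particular getting the exponent of $2$ right, which forces care about: the factor $\tfrac14$ from the boundary effect, the two factors of $\tfrac{1}{4^m}=2^{-2m}=2^{-(h-1)}$ from the two frontier lower bounds (total $2^{-2(h-1)}$), the $\tfrac{\pi}{2}$'s, and the leading $\tfrac12$ vs.\ $1$ in the exponent of the $\exp(-\cdot)$ bound. A secondary point to get right is the independence justification for the deferred-decision argument: one must order the exposure so that the growth of $\cN_m(u)$ in layers $1..m$, the growth of $\cN_{m,\sigma}(v)$ in layers $m+2..h$, and the layer-$(m+1)$ positions are revealed in three disjoint batches; the only interaction is through which vertices land in the two frontiers, and this is resolved by first exposing layers $1,\dots,m$ and $m+2,\dots,h$ (fixing the two frontier vertex-sets) and only then exposing layer $m+1$.
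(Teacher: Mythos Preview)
Your plan is correct and follows essentially the same route as the paper: write $h=2w+1$, grow length-$w$ $\sigma$-rainbow frontiers from each endpoint using the two disjoint blocks of colors, invoke Proposition~\ref{prop:exp} to lower-bound both frontier sizes by $\Theta((nr^2)^w)$, then show that a single layer-$(w+1)$ edge connects the two frontiers with failure probability $o(n^{-2})$, and finish by a union bound over all pairs.

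The one technical difference is that the paper first splits $V$ into two halves $V_1,V_2$ (with $v_i\in V_1$, $v_j\in V_2$) and grows the two frontiers inside disjoint vertex pools; this makes $A\cap B=\emptyset$ automatic and lets Proposition~\ref{prop:exp} be applied verbatim on $n/2$ vertices, at the cost of an extra factor $2^{-w}$ in each frontier bound. You instead keep the full vertex set and rely on the disjointness of the color blocks for independence; this is also valid (the layer-$(w+1)$ positions are untouched by either growth process), but you then need the small remark that any vertex in $A\cap B$ already yields a rainbow $u$--$v$ path and that $|A\cap B|=o(|A|\,|B|)$. On the constants: if you use the crude $\pi r^2/4$ edge-probability lower bound you will not recover the stated threshold for $h=3$; using the sharper $\pi r^2-o(r^2)$ from \eqref{eq:basicprob} (as the paper does) together with your un-halved frontiers actually gives a constant at least as good as the paper's $2^{2+3(h-1)}/\pi^3$ for every odd $h\ge 3$, so the stated bound follows. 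Your observation that the error terms in Proposition~\ref{prop:exp} are $e^{-\Theta(n^{1/h})}=o(n^{-2})$, hence survive the union bound over all vertices, is correct and is used implicitly by the paper as well.
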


\begin{proof}  Write $h=2w+1$.  
As before, we analyze the behavior in the extreme case. Let us  write  \[r =b\, \left(\frac{\log n}{n^{h-1}}\right)^{\frac{1}{2h}},\]
for some constant $b$ to be determined later.

Denote as $G_i=G_i(n,r)$,  the $i$-th. layer of $G$, $1\le i\le h$.  For a subset $J\subseteq [h]$, let $G_J(n,r)$ be the multilayered graph  formed by the layers included in $J$.   

\remove{For a pair  $v_i,v_j$ of distinct   vertices in $V$ and a permutation $\sigma$ of $\{1,2,3,\dots, h\}$,  let  $P(i,j,\sigma)$ denote the set of  rainbow paths of length $h$ joining $v_i$ and $v_j$, where the $i$-th. edge is in $G_{\sigma(i)}$. Define $J_1(\sigma)=\{\sigma(1),\dots, \sigma(w)\}$, $J_2(\sigma)=\{\sigma(w+2),\ldots ,\sigma (h)\}$.  In the following, we analyze the probability that the edges in $G_{w+1}$ join  paths from $v_i$ following $\sigma$ in $G_{J_1(\sigma)}$  with those ending at $v_j$ in $G_{J_2(\sigma)}$. }

For a permutation $\sigma$ of $\{1,2,3,\dots, h\}$, let $\sigma^R$ be the reversed permutation, i.e. $\sigma^R(i)= \sigma(n-i+1)$.  Let  $P(i,j,\sigma)$ denote the set of  rainbow paths of length $h$ joining $v_i$ and $v_j$, where the $i$-th. edge is in $G_{\sigma(i)}$. Define $J_1(\sigma)=\{\sigma(1),\dots, \sigma(w)\}$, $J_2(\sigma)=\{\sigma(w+2),\ldots ,\sigma (h)\}$.  In the following, we analyze the probability that the edges in $G_{w+1}$ join  rainbow paths of length $w$ starting at $v_i$ and following $\sigma$, thus using colors in $J_1(\sigma)$,  with  rainbow paths of length $w$ starting at $v_j$ and following $\sigma^R$, thus using colors in $J_2(\sigma)$, in order to form a rainbow path in  $P(i,j,\sigma)$.

Let $V_1,V_2$ be a partition of $V$ such that for a pair $v_i,v_j$ of distinct vertices in $V$, $v_i\in V_1$ and $v_j\in V_2$. 
Let $A$ be the set of vertices reachable from $v_i$ by $\sigma$-rainbow paths of length $w$, 
formed by vertices in $V_1$.  
Let $B$ be the set of vertices reached from $v_j$ by $\sigma^R$-rainbow paths of length $w$ formed by vertices in $V_2$.

Observe that $A\cap B = \emptyset$. Furthermore, both $A$ and $B$ can be seen as the sets $\cN_{w,\sigma}(v_i)$ and $\cN_{w,\sigma^R}(v_i)$ respectively, as defined in Equation~(\ref{eq:Nell}), for a multilayered random geometric graph on $m=n/2$ vertices and radius $r$.  Expressing $r$ as a function of $m$ we have that 
\begin{equation*}
r(m) = b\, \left(\frac{\log (2m)}{(2m)^{h-1}}\right)^{\frac{1}{2h}} =  \frac{b}{2^{h-1}}\, \left(\frac{\log (2m)}{(m)^{h-1}}\right)^{\frac{1}{2h}} \sim  \frac{b}{2^{h-1}}\, \left(\frac{\log m }{m^{h-1}}\right)^{\frac{1}{2h}}.
\end{equation*}
Thus the conditions of Proposition~\ref{prop:exp} are meet and we have that \whp 
\begin{align*}
& \frac{\pi}{ 4^w 2} (mr(m)^2)^{w} \leq |A| \leq \left(\frac{3\pi}{2}\right)^{w} (m r(m)^2)^{w} \text{ and } \\
&\frac{\pi}{ 4^w 2} (mr(m)^2)^{w} \leq |B| \leq \left(\frac{3\pi}{2}\right)^{w} (m r(m)^2)^{w}.
\end{align*} 
Rewriting as a function of $n$, 
\begin{equation}\label{eq:AB-LB}
\begin{split}
& \frac{\pi}{ 4^w 2^w 2} (nr^2)^{w} \leq |A| \leq \left(\frac{3\pi}{4}\right)^{w} (n r^2)^{w} \text{ and } \\
&\frac{\pi}{ 4^w 2^w 2} (nr^2)^{w} \leq |B| \leq \left(\frac{3\pi}{4}\right)^{w} (n r^2)^{w}.
\end{split}
\end{equation} 
The remaining analysis is done  by conditioning on this event. So, as $h=2w +1$, we have 
$$\frac{\pi^2}{8^{h-1}2} (nr^2)^{h-1}  \leq |A|\cdot|B| \leq \left(\frac{3\pi}{4}\right)^{h-1} (n r^2)^{h-1}.$$

For a pair $(v_\ell,v_{\ell'})\in A\times B$, let $Y_{\ell,\ell'}$ be the indicator that $v_\ell$ and $v_{\ell'}$ are neighbours in $G_{\sigma (w+1)}$. According to Equation~\eqref{eq:basicprob}, 
$$  \pi r^2 - o(r^2) \leq \Pr(Y_{\ell,\ell'}= 1) \leq \pi r^2.$$

Let $X_{i,j}=|P(i,j,\sigma)|$.  Then,
$$
X_{ij}\leq\sum_{\ell,\ell'\in A\times B} Y_{\ell,\ell'}.
$$
 We observe that the random variables $Y_{\ell,\ell'}$ are independent: when the pairs $(\ell,\ell'), (\lambda,\lambda')$ have no common value it is clear that $Y_{\ell,\ell'}, Y_{\lambda,\lambda'}$ are independent, while if $\ell=\lambda$, say, then $\Pr (Y_{\ell,\ell'}=1, Y_{\ell,\lambda'}=1)$ is the probability that $v_{\ell'}$ and $v_{\lambda'}$ are both adjacent to $v_\ell$, which is the product $\Pr (Y_{\ell,\ell'}=1)\Pr (Y_{\ell,\lambda'}=1)$.

Then, for a pair of vertices $v_i,v_j$ 
not connected by a rainbow path of length $h-1$,  the sets  $A$ and $B$ are disjoint. In this case,  
\begin{align*}
\Pr (X_{i,j}=0)&\leq\Pr (\cap_{\ell,\ell'} \{Y_{\ell,\ell'\in A\times B}=0\})
=\prod_{\ell,\ell'} \Pr (Y_{\ell,\ell'}=0)\\
&\leq\Pr (Y_{\ell,\ell'}=0)^{|A|\cdot|B|}\\
&\le (1-\pi r^2 + o(r^2))^{\frac{\pi^2}{8^{h-1}2} (n r^2)^{h-1}}\\
&\le e^{-\pi \frac{\pi^2}{8^{h-1}2} r^2 (n r^2)^{h-1} + o\left(r^2 (n r^2)^{h-1}\right)}\\
&\leq e^{-\frac{\pi^3}{8^{h-1}2}  n^{h-1}r^{2h}}\, .
\end{align*}
By using the union bound on all pairs $i,j$,
$$
\Pr (\cap_{i,j}\{X_{ij}\ge 1\})=1-\Pr (\cup_{i,j} X_{i,j}=0)\ge 1-n^2e^{\frac{\pi^3}{8^{h-1}2}  n^{h-1}r^{2h}}.
$$
Substituting  $r$ by its value, 
\[n^{h-1}r^{2h}=  b^{2h} \ln n.\]
Therefore, choosing $b$  so that $2-\frac{\pi^3}{8^{h-1}2}  b^{2h} <0$, i.e., 
$$b> \left(\frac{4}{\pi^3} 8^{h-1}\right)^{\frac{1}{2h}} $$  
the last term in the bound on $\Pr (\cap_{i,j}\{X_{ij}\ge 1\})$ 
is $o(1)$. Hence \whp all pairs $v_i,v_j$ are connected by  a rainbow path of length $h$.
\end{proof}

We next consider the case $h$ even.
\begin{proposition}\label{prop:lheven} 
Let $h\geq 3$ be a fixed even number. 
Let $G=G(n, r(n), h)$ be a layered random geometric graph. 	If
$$
r(n)\ge b\, \left(\frac{\log n}{n^{h-1}}\right)^{\frac{1}{2h}},
$$
for $b> \left(\frac{2^{2+3(h-1)}}{\pi^3}\right)^{\frac{1}{2h}}$, 
then $G$ is \whp rainbow connected.	
\end{proposition}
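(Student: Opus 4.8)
I would follow the proof of Proposition~\ref{prop:lhodd} almost verbatim. Write $h=2w$, so $w\ge 2$ since $h$ is even with $h\ge 3$. The only new point is that a rainbow path of length $h$ can no longer be split into two halves of the same length plus a central edge, so instead I would use an \emph{asymmetric} split: a prefix of length $w$ from $v_i$, one central edge, and a suffix of length $w-1$ arriving at $v_j$, which uses $w+1+(w-1)=2w=h$ edges. Fix a pair $v_i\ne v_j$ and, without loss of generality, let $\sigma$ be the identity. Choose a balanced partition $V=V_1\cup V_2$ with $v_i\in V_1$ and $v_j\in V_2$; let $A$ be the set of vertices reached from $v_i$ by $\sigma$-rainbow paths of length $w$ all of whose vertices lie in $V_1$, and let $B$ be the set of vertices reached from $v_j$ by $\sigma^R$-rainbow paths of length $w-1$ all of whose vertices lie in $V_2$. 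Then $A$ uses the colours $\{1,\dots,w\}$, $B$ uses the colours $\{w+2,\dots,h\}$, the colour $w+1$ is free for the central edge, and these three colour sets partition $[h]$. Moreover $A\cap B=\emptyset$ because $V_1\cap V_2=\emptyset$, and for any $v_\ell\in A$ and $v_{\ell'}\in B$ joined by an edge of $G_{w+1}$, concatenating a path from $v_i$ to $v_\ell$ inside $V_1$, the edge $v_\ell v_{\ell'}$, and a path from $v_{\ell'}$ to $v_j$ inside $V_2$ produces a vertex-simple rainbow path of length $h$ from $v_i$ to $v_j$.

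Next I would regard $A$ and $B$ as the sets $\cN_{w,\sigma}$ and $\cN_{w-1,\sigma^R}$ of a multilayered random geometric graph on $m=n/2$ vertices, note that $r$ written as a function of $m$ still satisfies $r(m)=\Theta\bigl((\log m/m^{h-1})^{1/2h}\bigr)$, and apply Proposition~\ref{prop:exp} with $\ell=w$ and with $\ell=w-1$; both indices are admissible because $1\le w-1<w\le h-1$ (this is the only place where $h\ge 4$ is used). Using $m\,r(m)^2=\tfrac12\,nr^2$, this gives \whp
\[
|A|\ \ge\ \frac{\pi}{2^{3w+1}}\,(nr^2)^{w}, \qquad |B|\ \ge\ \frac{\pi}{2^{3w-2}}\,(nr^2)^{w-1},
\]
and consequently $|A|\cdot|B|\ \ge\ \dfrac{\pi^2}{2^{2+3(h-1)}}\,(nr^2)^{h-1}$. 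The single factor $nr^2$ by which $|B|$ falls short of $|A|$ is exactly compensated by $w+(w-1)=h-1$, so $|A|\cdot|B|$ has the same order, $(nr^2)^{h-1}$, as in the odd case.

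From here the argument is identical to that of Proposition~\ref{prop:lhodd}. Conditioning on the above size event, for $(v_\ell,v_{\ell'})\in A\times B$ let $Y_{\ell,\ell'}$ indicate that $v_\ell$ and $v_{\ell'}$ are adjacent in $G_{w+1}$; by the principle of deferred decisions the layer $G_{w+1}$ is still unrevealed, so these indicators are independent of the construction of $A$ and $B$, they are mutually independent by the argument used in Proposition~\ref{prop:lhodd}, and $\Pr(Y_{\ell,\ell'}=1)\ge \pi r^2-o(r^2)$ by~\eqref{eq:basicprob}. If $X_{ij}$ counts the rainbow paths from $v_i$ to $v_j$ of length $h$ of the form above, then $X_{ij}=0$ forces $Y_{\ell,\ell'}=0$ for every pair, so
\[
\Pr(X_{ij}=0)\ \le\ \bigl(1-\pi r^2+o(r^2)\bigr)^{|A|\cdot|B|}\ \le\ \exp\!\left(-\frac{\pi^3}{2^{2+3(h-1)}}\,n^{h-1}r^{2h}\,(1+o(1))\right).
\]
Since $n^{h-1}r^{2h}=b^{2h}\log n$, a union bound over the fewer than $n^2$ pairs of vertices then shows, exactly as in Proposition~\ref{prop:lhodd}, that for $b>\left(\frac{2^{2+3(h-1)}}{\pi^3}\right)^{\frac{1}{2h}}$ the graph $G$ is \whp rainbow connected.

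The only genuinely new ingredient, and the step to be careful with, is the asymmetric split: one must verify that Proposition~\ref{prop:exp} still applies to the shorter chain $B$ of length $w-1$ (which is exactly why $h\ge 4$, i.e.\ $h$ even and at least $3$, is needed) and that the product $|A|\cdot|B|$ remains of order $(nr^2)^{h-1}$; the balanced partition $V=V_1\cup V_2$ is precisely what guarantees that the concatenated path from $v_i$ to $v_j$ is vertex-simple. Beyond this, the proof reduces to the one already given in the odd case.
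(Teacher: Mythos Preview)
Your proposal is correct and follows essentially the same route as the paper: write $h=2w$, take the asymmetric split into a $\sigma$-rainbow prefix of length $w$ from $v_i$ inside $V_1$ and a $\sigma^R$-rainbow suffix of length $w-1$ from $v_j$ inside $V_2$, apply Proposition~\ref{prop:exp} at levels $w$ and $w-1$ on the two halves of size $m=n/2$, and then repeat verbatim the central-edge computation and union bound from the odd case. Your explicit check that $1\le w-1<w\le h-1$ (hence $h\ge 4$) and your remark that the balanced partition forces the concatenated path to be vertex-simple are exactly the points the paper glosses over; your lower bound $|A|\cdot|B|\ge \pi^2\,2^{-(2+3(h-1))}(nr^2)^{h-1}$ is in fact the correct constant and matches the threshold condition on $b$.
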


\begin{proof}
  Let $h=2 w$ and let $b$ a constant to be defined later.  The proof is similar to the one for the case $h$ odd.  We analyze the probability that the edges in layer $w+1$ join rainbow paths of length $w$ starting at $v_i$ with rainbow paths of length $w-1$ ending in $v_j$ for a prefixed permutation of color.  The main  difference is in the definition of the sets $A$ and $B$. 

Let $V_1,V_2$ be a balanced partition of $V$ so that $v_i\in v_1$ and $v_j\in V_2$.  
Let $A$  be the set of vertices reached from $v_i$ by rainbow paths of length $w$ starting at $v_i$ following the color order determined by $\sigma$ on vertices from $V_1$ .  Let $B$  be the set of vertices reached from $v_j$ by rainbow paths of length $w-1$ starting at $v_j$  following the reverse order in $\sigma$, the $i$-th. edge colored $\sigma (k-i)$ on vertices from $V_2$.  

In a similar way as done to get 
Equation~(\ref{eq:AB-LB}),
 the multilayer random graphs on $m=n/2$ vertices and radius $r$ verify the conditions of Proposition~\ref{prop:exp}. Then,   \whp 
\begin{align*}
& \frac{\pi}{ 4^w 2} (mr(m)^2)^{w} \leq |A| \leq \left(\frac{3\pi}{2}\right)^{w} (m r(m)^2)^{w} \text{ and } \\
&\frac{\pi}{ 4^{w-1} 2} (mr(m)^2)^{w-1} \leq |B| \leq \left(\frac{3\pi}{2}\right)^{w-1} (m r(m)^2)^{w-1}.
\end{align*} 
Rewriting as a function of $n$, 
\begin{align*}
& \frac{\pi}{ 4^{w} 2^w 2} (nr^2)^{w} \leq |A| \leq \left(\frac{3\pi}{4}\right)^{w} (n r^2)^{w} \text{ and } \\
&\frac{\pi}{ 4^{w-1} 2^{w-1} 2} (nr^2)^{w-1} \leq |B| \leq \left(\frac{3\pi}{4}\right)^{w-1} (n r^2)^{w-1}.
\end{align*} 
Therefore, 
$$\frac{\pi}{8^{2w-1} 2} (nr^2)^{2w-1}  \leq |A|\cdot|B| \leq \left(\frac{3\pi}{2}\right)^{2w-1} (n r^2)^{2w-1}.$$

As, $h=2w$, we get the same lower and upper bound for $|A|\cdot|B|$ as in the  case of $h$ odd.

The remaining part of the proof is conditioned on this fact and is exactly equal to the one for $h$ odd. 

We conclude that, choosing $b$  so that  
$$b> \left(\frac{4}{\pi^3} 8^{h-1}\right)^{\frac{1}{2h}} $$  \whp all pairs $v_i,v_j$ are connected by  a rainbow path of length $h$.
\end{proof}

\section{Upper bound for $h\geq 3$}\label{sec:upper}

For the lower bound  in Theorem~\ref{thm:main1}, we use an induction argument.
\begin{proposition}\label{prop:lboundh} Let $h\ge 3$ be fixed.  Let $G=G(n,r(n),h)$ be a multilayered random geometric graph.
If  
$$
r(n)\le c\left(\frac{\log n}{n^{h-1}}\right)^{1/2h},
$$ 
for   $c < \frac{2}{3\pi} \left(\frac{1}{2\pi}\right)^{(h-1)}$,
then \whp, $G$ is not rainbow connected.
\end{proposition}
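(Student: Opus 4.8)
The plan is to adapt the second–moment strategy of Proposition~\ref{prop:main1h=2} to general $h$, using Proposition~\ref{prop:exp} to control reachable sets. Since rainbow connectivity is monotone in $r$, it is enough to treat the extreme value $r=c\left(\frac{\log n}{n^{h-1}}\right)^{1/2h}$; for this choice $r=\Theta\big((\log n/n^{h-1})^{1/2h}\big)$, so Proposition~\ref{prop:exp} applies, $nr^2=c^2(n\log n)^{1/h}\to\infty$, $(nr^2)^{h-1}=o(n)$, and $n^{h-1}r^{2h}=c^{2h}\log n$. Fix a vertex $v_i$ and let $Y_{v_i}$ count the vertices $v_j$ with no rainbow path to $v_i$; we want $Y_{v_i}\ge 1$ \whp. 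Because a rainbow path uses each colour at most once it has length at most $h$, and inspecting its last edge shows that for $v_i\neq v_j$,
\[
v_j\text{ is rainbow-reachable from }v_i\iff\exists\,k\in[h]:\ \cB_k(v_j)\cap T^{(k)}\neq\emptyset,
\]
where $T^{(k)}=T^{(k)}(v_i,v_j)$ is the set of vertices reachable from $v_i$ by a rainbow path of length at most $h-1$ that avoids colour $k$ and avoids $v_j$ internally. The key observation is that $T^{(k)}$ is a union, over the constantly many orderings $\sigma$ on subsets of $[h]\setminus\{k\}$ and lengths $\ell\le h-1$, of sets of the form $\cN_{\ell,\sigma}(v_i)$, each determined by the positions of the vertices in the layers $\neq k$; so by Proposition~\ref{prop:exp} together with a union bound, \whp\ $|T^{(k)}|\le C_h\,(nr^2)^{h-1}$ for every $k$ simultaneously, with $C_h$ depending only on $h$ (one may take $C_h=(1+o(1))(h-1)!\,(3\pi/2)^{h-1}$).

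Now reveal the positions layer by layer; since $T^{(k)}$ depends only on the layers $\neq k$, once those are fixed the event $\{\cB_k(v_j)\cap T^{(k)}=\emptyset\}$ concerns only the layer-$k$ positions of $v_j$ and of the (uniformly distributed) members of $T^{(k)}$, and by the interior-point estimate (using $\Pr(u\in\cB(v_j))\le \pi r^2$ from \eqref{eq:basicprob}) it has probability at least $(1-\pi r^2)^{|T^{(k)}|}$. Treating the $h$ colours as independent in this way and restricting to the likely event above,
\[
\Pr\big(v_j\text{ not rainbow-reachable from }v_i\big)\ \ge\ (1-o(1))\,(1-\pi r^2)^{h\,C_h(nr^2)^{h-1}}\ =\ n^{-(1+o(1))\,\pi h C_h\, c^{2h}},
\]
using $(1-\pi r^2)^{(nr^2)^{h-1}}=e^{-(1+o(1))\pi n^{h-1}r^{2h}}=n^{-(1+o(1))\pi c^{2h}}$. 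Hence $\E(Y_{v_i})\ge n^{\,1-(1+o(1))\pi h C_h c^{2h}}\to\infty$ as soon as $\pi h C_h c^{2h}<1$; with $C_h$ bounded crudely as above this makes the stated condition $c<\frac{2}{3\pi}\big(\frac{1}{2\pi}\big)^{h-1}$ vastly more than sufficient, the closed form being only a convenient over-estimate. One can equally package the computation as an induction on $h$ with base case Proposition~\ref{prop:main1h=2}, Proposition~\ref{prop:exp} (itself proved by induction on $\ell$) serving as the inductive engine.

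It remains to pass from $\E(Y_{v_i})\to\infty$ to $Y_{v_i}\ge 1$ \whp, by a Chebyshev estimate exactly parallel to the $h=2$ case: for distinct $v_j,v_{j'}$, conditioning on the positions of all vertices other than $v_j,v_{j'}$ changes the relevant reachable sets by only $O((nr^2)^{h-2})=o((nr^2)^{h-1})$ and makes the avoidance events for $v_j$ and for $v_{j'}$ (essentially) independent, so $\E\big(\mathbf 1[v_j\text{ unreached}]\,\mathbf 1[v_{j'}\text{ unreached}]\big)\le (1+o(1))\Pr(v_j\text{ unreached})^2$; summing gives $\mathrm{Var}(Y_{v_i})=o(\E(Y_{v_i})^2)$, hence $Y_{v_i}\ge 1$ \whp\ and $G$ is not rainbow connected. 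The main obstacle is precisely the conditioning bookkeeping in the displayed inequality: the reachable sets have size $(nr^2)^{h-1}$, a genuine power of $n$ and not negligible, so they cannot simply be discarded, and one must reveal the $h$ layers in the right order — each $T^{(k)}$ being fixed from the layers $\neq k$ before the layer-$k$ positions are sprinkled — so that the blocking events in the $h$ colours each receive their exponential-decay lower bound while behaving independently; carrying the constant $C_h$ through the union bound over colour orderings and tidying the second-moment step are the remaining routine points.
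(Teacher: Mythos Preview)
Your route differs from the paper's, and the difference matters. The paper does \emph{not} treat all $h$ colours at once: it fixes a single permutation $\sigma$, lets $A=\cN_{h-1,\sigma}(v_i)$ (which is determined entirely by the layers $\sigma(1),\dots,\sigma(h-1)$), and for each $v_j$ in the complement $B$ takes $Y_{ij}$ to be the indicator that $\cB_{\sigma(h)}(v_j)\cap A=\emptyset$. Only the single layer $\sigma(h)$ is left fresh, so conditionally on the first $h-1$ layers the variables $Y_{ij}$ are independent Bernoulli across $j\in B$; a Chernoff bound then gives $Y_i\ge 1$ w.h.p., and the paper finishes by a union over the $h!$ permutations. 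No second-moment computation is needed.

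The genuine gap in your argument is the step ``treating the $h$ colours as independent''. You want
\[
\Pr\Big(\bigcap_{k=1}^h\{\cB_k(v_j)\cap T^{(k)}=\emptyset\}\Big)\ \ge\ \prod_{k=1}^h(1-\pi r^2)^{|T^{(k)}|},
\]
but neither of the two natural conditionings yields this. If you condition on the positions of all vertices other than $v_j$ in all layers, the $h$ events do become independent (each depends only on $v_j$'s own layer-$k$ position), but then the conditional probability of the $k$-th event is the area left uncovered by $|T^{(k)}|$ \emph{fixed} discs of radius $r$, and since $|T^{(k)}|\,\pi r^2$ is of order $c^{2h}\log n\to\infty$ this area can be zero and gives no usable lower bound. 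Conversely, the product bound $(1-\pi r^2)^{|T^{(k)}|}$ requires the layer-$k$ positions of the members of $T^{(k)}$ to remain random, and then the events for different $k$ are entangled: $T^{(1)}$ is a function of the positions in layers $2,\dots,h$, which is exactly the randomness you need fresh for the events $E_2,\dots,E_h$. Your proposed fix---``reveal the $h$ layers in the right order''---cannot work, because each $T^{(k)}$ depends on \emph{every} layer except $k$, so no ordering of the layers determines all of the $T^{(k)}$ before the corresponding layer-$k$ positions are sprinkled. This is the obstacle you flag in your final paragraph, and it is unresolved, not routine. The paper's device of working with one $\sigma$ (hence a single fresh layer $\sigma(h)$) is precisely what sidesteps this entanglement; adapting your argument to that framework and then taking the union over $\sigma$ is the missing idea.
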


\begin{proof} Denote by $G_i=G_i(n,r)$ the $i$-th layer of $G$, $1\le i\le h$, where 
$r=r(n) = c\left(\frac{\log n}{n^{h-1}}\right)^{1/2h}$. The proof is by induction on $h$. By Proposition \ref{prop:main1h=2} the statement holds for $h= 2$ as $\frac{1}{2\pi^2}<0.56$. 

Fix a permutation $\sigma$ of $\{1,2,\ldots ,h\}$   and a vertex $u\in V$. Let $A=\cN_{h-1,\sigma}(u)$ and  $B=V\setminus \cup_{t=0}^{h-1} \cN_{t,\sigma}(u)$.  

According to Proposition~\ref{prop:exp}, \whp, for $1\leq \ell \leq h-1$,  
$$\frac{\pi}{ 4^t 2} (nr^2)^{t} \leq |\cN_{t,\sigma}(u)| \leq \left(\frac{3\pi}{2}\right)^{t} (n r^2)^{t}.$$ 

Conditioning  on this event, we have 
\begin{align*}
|B| &=n-\sum_{t=0}^{h-1} |\cN_{t,\sigma}(u)|\geq n - \sum_{t=0}^{h-1} \left(\frac{3\pi}{2}^t\right)(n r^2)^{t} \\
&\geq n - \left(\frac{3\pi}{2}^{h-1}\right)(n r^2)^{h-1}= n - o(n).
\end{align*} 
Thus, for $n$ sufficiently large, $|B|\ge n/2$.

 For each $v_j\in B$, let $Y_{ij}$ be the indicator function that the neighborhood of $v_j$ in $G_{\sigma (h)}$ is disjoint from $A$. We have $\E(Y_{ij})\ge (1-\pi r^2)^{|A|}$. Consider the random variable  $Y_i=\sum_{v_j\in B}Y_{ij}$ which counts  the number of vertices in $B$ whose neighborhood in $G_{\sigma (h)}$ is disjoint from $A$. By using $|B|\ge n/2$, $1-x \geq e^{-2x}$, the lower bound on $|A|$  and the upper bound on $r$, we have
\begin{align*}
\E(Y_i)&=\sum_{v_j\in B}\E(Y_{ij})
\ge |B|(1-\pi r^2) ^{|A|}\\
&\ge \frac{n}{2}e^{-2\pi r^2|A|}
\ge \frac{n}{2}e^{-\left(2 \pi r^2\left(\frac{3\pi}{2}\right)^{h-1} (nr^2)^{h-1}\right)}\\
&\ge \frac{1}{2}e^{\left(1- 2\pi \left(\frac{3\pi}{2}\right)^{h-1} c^{h-1}\right) \ln n}\, .
\end{align*}
By choosing $c$ such that $1- 2\pi \left(\frac{3\pi}{2}\right)^{h-1} c^{h-1} > 0$, i.e., 
$$c < \frac{2}{3\pi} \left(\frac{1}{2\pi}\right)^{(h-1)},$$
we have $\E(Y_i)\to \infty $ as $n\to \infty$. 

Since the variables $Y_{ij}$ are independent for $v_j\in B$, by Chernoff's bound, we have $\Pr (Y_i=0)\le e^{-\E(Y_i)/2}=o(1)$. Therefore, \whp there are pairs $v_i,v_j$ not joined by rainbow paths with the $i$-th edge along the path from $v_i$ to $v_j$ in $G_{\sigma (i)}$. By repeating the same argument for all $h!$ permutations $\sigma$, \whp the graph $G$ is not rainbow connected. Finally, by monotonicity of the property, we get the result.
\end{proof}

\section{Conclusions}


The main purpose of this paper is to identify the threshold for radius $r(n)$ to get a rainbow-connected multilayered random geometric graph, as obtained in Theorem \ref{thm:main1}. As mentioned in Section~\ref{sec:intro}, the analogous problem of determining the threshold for $h$ in order for the multilayered binomial random graph is rainbow connected  was addressed by Bradshaw and Mohar~\cite{bradshaw2021}. 

We find the model of multilayered random geometric graphs highly appealing, because  
it leads to a host of interesting problems. One may think of a dynamic setting, where $n$ individuals perform random walks within the cube and communicate with  close neighbors at discrete times $t_1<t_2<\cdots <t_h$. The rainbow connectivity in this setting measures the number of instants needed so that every individual can communicate with each  other one. A natural immediate extension is to find  the threshold for getting  rainbow connectivity $k$, as achieved in the case of multilayered binomial random graphs by Shang~\cite{Shang2023}. 

There is a vast literature addressing rainbow problems in random graph models, and this paper is meant to open the path to these problems in the context of multilayered random geometric graphs. 

 It would also be interesting to find asymptotic estimates on $r$ such that $h$ copies produce a rainbow clique of size $\sqrt{h}$.

Observe that for large $h$, the threshold of $r$ for rainbow connectivity approaches the connectivity threshold of random geometric graphs. However, the arguments in the proof apply only for constant $h$. For $h$ growing as a function of $n$, the correlation between distinct edges in our model decrease, and the model gets closer to the random binomial graph, where the results are expected to behave differently and the geometric aspects of the model become irrelevant.

\section*{Acknowledgments} 
We want to thank the anonymous referee for the careful reading and helpful suggestions.

\bibliographystyle{abbrv}
\bibliography{biblio.bib}

\begin{thebibliography}{10}

\bibitem{Barrat2004}
A.~Barrat, M.~Barthelemy, R.~Pastor-Satorras, and A.~Vespignani.
\newblock The architecture of complex weighted networks.
\newblock {\em Proceedings of the national academy of sciences},
  101(11):3747--3752, 2004.

\bibitem{bianconi2018}
G.~Bianconi.
\newblock {\em Multilayer networks: structure and function}.
\newblock Oxford university press, 2018.

\bibitem{bradshaw2021}
P.~Bradshaw and B.~Mohar.
\newblock A rainbow connectivity threshold for random graph families.
\newblock In {\em Extended Abstracts EuroComb 2021: European Conference on
  Combinatorics, Graph Theory and Applications}, pages 842--847. Springer,
  2021.

\bibitem{Chartraud2009}
G.~Chartrand, G.~L. Johns, K.~A. McKeon, and P.~Zhang.
\newblock The rainbow connectivity of a graph.
\newblock {\em Networks}, 54(2):75--81, 2009.

\bibitem{Diaz2016}
J.~D\'{\i}az, D.~Mitsche, G.~Perarnau, and X.~P\'{e}rez-Gim\'{e}nez.
\newblock On the relation between graph distance and {E}uclidean distance in
  random geometric graphs.
\newblock {\em Adv. in Appl. Probab.}, 48(3):848--864, 2016.

\bibitem{dickison2016}
M.~E. Dickison, M.~Magnani, and L.~Rossi.
\newblock {\em Multilayer social networks}.
\newblock Cambridge University Press, 2016.

\bibitem{Duchemin}
Q.~Duchemin and Y.~De~Castro.
\newblock Random geometric graph: Some recent developments and perspectives.
\newblock In R.~Adamczak, N.~Gozlan, K.~Lounici, and M.~Madiman, editors, {\em
  High Dimensional Probability IX}, pages 347--392, Cham, 2023. Springer
  International Publishing.

\bibitem{gilbert}
E.~N. Gilbert.
\newblock Random plane networks.
\newblock {\em Journal of the society for industrial and applied mathematics},
  9(4):533--543, 1961.

\bibitem{goel2005}
A.~Goel, S.~Rai, and B.~Krishnamachari.
\newblock Monotone properties of random geometric graphs have sharp thresholds.
\newblock {\em Ann. Appl. Probab.}, 15(4):2535--2552, 2005.

\bibitem{Li2013}
X.~Li, Y.~Shi, and Y.~Sun.
\newblock Rainbow connections of graphs: A survey.
\newblock {\em Graphs and combinatorics}, 29:1--38, 2013.

\bibitem{McDiarmid1989}
C.~McDiarmid et~al.
\newblock On the method of bounded differences.
\newblock {\em Surveys in combinatorics}, 141(1):148--188, 1989.

\bibitem{Penrose}
M.~Penrose.
\newblock {\em Random geometric graphs}, volume~5.
\newblock OUP Oxford, 2003.

\bibitem{Shang2023}
Y.~Shang.
\newblock Concentration of rainbow {$k$}-connectivity of a multiplex random
  graph.
\newblock {\em Theoret. Comput. Sci.}, 951:Paper No. 113771, 11, 2023.

\bibitem{Walters}
M.~Walters.
\newblock Random geometric graphs.
\newblock In {\em Surveys in combinatorics 2011}, volume 392 of {\em London
  Math. Soc. Lecture Note Ser.}, pages 365--401. Cambridge Univ. Press,
  Cambridge, 2011.

\end{thebibliography}

\end{document}